\colorlet{darkblue}{blue!55!black}
\colorlet{darkcyan}{cyan!50!black}
\colorlet{darkgreen}{green!60!black}
\def\eqref#1{\textcolor{darkblue}{(\ref{#1})}}
\crefname{hypothesis}{hypothesis}{hypotheses}
\Crefname{hypothesis}{Hypothesis}{Hypotheses}
\let\oldequation\equation
\let\oldendequation\endequation
\let\expandafter\oldequationstar\csname equation*\endcsname
\let\expandafter\oldendequationstar\csname endequation*\endcsname
\renewenvironment{equation*}{\linenomathNonumbers\oldequationstar}{\oldendequationstar\endlinenomath}
\let\oldalign\align
\let\oldendalign\endalign
\let\expandafter\oldalignstar\csname align*\endcsname
\let\expandafter\oldendalignstar\csname endalign*\endcsname
\renewenvironment{align*}{\linenomathNonumbers\oldalignstar}{\oldendalignstar\endlinenomath}
\newcounter{intro}
\newcounter{HypCounter}
\newtheorem{introthm}[intro]{Theorem}
\newtheorem{introprop}[intro]{Proposition}
\theoremstyle{plain}
\newtheorem{theorem}{Theorem}[section]
\newtheorem{lemma}[theorem]{Lemma}
\newtheorem{proposition}[theorem]{Proposition}
\theoremstyle{definition}
\newtheorem{example}[theorem]{Example}
\newtheorem*{hypothesis*}{Hypothesis}
\newtheorem{remark}[theorem]{Remark}
\newtheorem*{ack}{Acknowledgements}
\numberwithin{equation}{section}
\numberwithin{theorem}{section}
\title[Integral transforms and singularity categories]{Integral transforms \\ on singularity categories \\ for Noetherian schemes}
\author[U. ~Dutta]{Uttaran Dutta}
\address{U. ~Dutta, 
Department of Mathematics,
University of South Carolina, 
Columbia, SC 29208,
U.S.A.}
\email{udutta@email.sc.edu}
\author[P. ~Lank]{Pat Lank}
\address{P.~Lank,
Dipartimento di Matematica “F. Enriques”, Universit\`{a} degli Studi di Milano, Via Cesare
Saldini 50, 20133 Milano, Italy}
\email{plankmathematics@gmail.com}
\author[K. ~Manali-Rahul]{Kabeer Manali-Rahul}
\address{K. ~Manali-Rahul,
Center for Mathematics and its Applications, 
Mathematical Science Institute, Building 145, 
The Australian National University, 
Canberra, ACT 2601, Australia}
\email{kabeer.manalirahul@anu.edu.au}
\date{\today}
\keywords{Integral transforms, derived categories, singularity categories, approximable triangulated categories, (birational) derived splinters}
\subjclass[2020]{14A30 (primary), 14F08, 18G80, 14B05} 
\begin{document}
    
\begin{abstract}
    This work studies conditions under which integral transforms induce exact functors on singularity categories between schemes that are proper over a Noetherian base scheme. A complete characterization for this behavior is provided, which extends earlier work of Ballard and Rizzardo. We leverage a description of the bounded derived category of coherent sheaves as finite cohomological functors on the category of perfect complexes, which is an application of Neeman's approximable triangulated categories, to reduce arguments to an affine local setting. Moreover, we study adjoints of such functors, extend a result of Olander to varieties with mild singularities, and provide an obstruction for derived equivalences between singular varieties.
\end{abstract}

\maketitle

\section{Introduction}
\label{sec:intro}

Our work characterizes the conditions under which integral transforms induce exact functors on the singularity categories for schemes of interest. Specifically, we work with schemes $Y_i$ that are proper over a Noetherian base, and study when an integral transform induce functors on the category of perfect complexes and/or bounded derived category of coherent sheaves. These are respectively denoted $\operatorname{Perf}(Y_i)$ and $D^b_{\operatorname{coh}}(Y_i)$. 

This not only builds upon the prior work of \cite{Ballard:2009,Rizzardo:2017}, but also gives fresh insight into the homological information contained in derived categories for singular varieties. This study has been hitherto mostly been done only in the smooth setting. So to set the stage, we start with some history. 

Integral transforms were first introduced in \cite{Mukai:1981} for Abelian varieties, but have been studied in general settings since than. An important result, due to Orlov \cite{Orlov:1997}, states that a fully faithful functor $F\colon D^b_{\operatorname{coh}}(Y_1) \to D^b_{\operatorname{coh}}(Y_2)$ between \textit{smooth} projective complex varieties is naturally isomorphic to an integral transform. This means $F$ is naturally isomorphic to a functor of the form $\mathbf{R}p_{2,\ast} (\mathbf{L}p_1^\ast (-) \otimes^\mathbf{L} K)$ for $K$ an object of $D_{\operatorname{qc}}(Y_1\times_k Y_2)$ and $p_i\colon Y_1\times_k Y_2 \to Y_i$ the projection morphisms.

There are a few reasons for studying integral transforms between varieties, and more generally, schemes. One such is to extract geometric information about varieties and their derived categories. However, until fairly recently, most attention has been directed towards smooth varieties. Luckily, progress has been made on extending classical results such as \cite{Orlov:1997} to the singular context (e.g.\ \cite{Ballard:2009, Ruiperez/Martin/deSalas:2007, Ruiperez/Martin/deSalas:2009, Lunts/Orlov:2010}). Moreover, integral transforms on singular projective curves has enjoyed progress as well \cite{Burban/Kreussler:2006, Burban/Zheglov:2018, LopezMartin:2014, Spence:2023}. 

This largely motivated our work. Specifically, it is the initial stage for a program on understanding derived categories of singular varieties through integral transforms. A key first step is to understand when integral transforms induce functors on categories of interest. We push this to a larger generality by allowing cases that are arithmetically flavored (e.g.\ proper schemes over a Dedekind domain).
 
\subsection{Preservation}
\label{sec:intro_preservation}

The structure of the derived category for a singular variety is much less understood than that of smooth varieties. However, recent efforts have made strides towards bettering our knowledge. These include, for instance, connections to regularity \cite{Neeman:2021a, Clark/Lank/Parker/ManaliRahul:2024}, the closedness of the singular locus of a scheme \cite{Dey/Lank:2024a, Iyengar/Takahashi:2019}, and methods for detecting singularities in birational geometry \cite{Lank/Venkatesh:2024, Lank/McDonald/Venkatesh:2025,BILMP:2023,Lank:2025}.

One approach to hurdling over this difficulty in understanding $D^b_{\operatorname{coh}}(X)$ is to study other categories that can be cooked up from it. Recall that the singularity category of a Noetherian scheme $X$, denoted $D_{\operatorname{sg}}(X)$, is defined as the Verdier localization of $D^b_{\operatorname{coh}}(X)$ by $\operatorname{Perf}(X)$. This notion first appeared in the affine setting \cite{Buchweitz/Appendix:2021} but was later rediscovered later in the geometric context \cite{Orlov:2004}. 

The singularity category detects regularity for a Noetherian scheme, as $D_{\operatorname{sg}}(X)$ is trivial if and only if $X$ is regular. There has been work on the $K$-theoretic aspects of the singularity category \cite{Pavic/Shinder:2021} and on when varieties have triangle equivalent singularity categories \cite{Kalck:2021,Knorrer:1987,Matsui/Takahashi:2017}. Interestingly enough, such (triangulated) equivalences have arisen as induced functors from integral transforms on $D^b_{\operatorname{coh}}$ (see \Cref{sec:prelim_singularity_category}). One goal for us was to characterize more explicitly computable conditions for when integral transforms induce such functors.

This brings us to our first result.

\begin{introthm}[see \Cref{thm:integral_transform_induced_singularity}]\label{introthm:integral_transform_induced_singularity}
    Let $f_1 \colon Y_1 \to S$ and $f_2 \colon Y_2 \to S$ be proper morphisms with $S$ a Noetherian scheme. Consider the fibered square:
    \begin{displaymath}
        \begin{tikzcd}[ampersand replacement=\&]
            {Y_1\times_S Y_2} \& {Y_2} \\
            {Y_1} \& S
            \arrow["{p_2}", from=1-1, to=1-2]
            \arrow["{p_1}"', from=1-1, to=2-1]
            \arrow["\lrcorner"{anchor=center, pos=0.125}, draw=none, from=1-1, to=2-2]
            \arrow["{f_2}", from=1-2, to=2-2]
            \arrow["{f_1}"', from=2-1, to=2-2]
        \end{tikzcd}
    \end{displaymath}
    Then the following are equivalent for any object $K$ in $D^-_{\operatorname{coh}}(Y_1 \times_S Y_2)$:
    \begin{enumerate}
        \item $\Phi_K (\operatorname{Perf}(Y_1))$ and $\Phi_K (D^b_{\operatorname{coh}}(Y_1))$ are respectively contained in $\operatorname{Perf}(Y_2)$ and $D^b_{\operatorname{coh}}(Y_2)$ 
        \item $\mathbf{R}p_{i,\ast} (K\otimes^{\mathbf{L}} \operatorname{Perf}(Y_1\times_S Y_2))\subseteq \operatorname{Perf}(Y_i)$ for each $i$.
    \end{enumerate}
    Consequently, if either of the above conditions hold, then $\Phi_K$ induces an exact functor $\breve{\Phi_K}\colon D_{\operatorname{sg}}(Y_1) \to D_{\operatorname{sg}}(Y_2)$.
\end{introthm}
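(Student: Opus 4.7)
The plan is to prove the equivalence (1) $\Leftrightarrow$ (2) by direct implications and then deduce the induced exact functor on singularity categories from the universal property of Verdier localization.

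For (2) $\Rightarrow$ (1): the $\operatorname{Perf}$-preservation is immediate, since $\mathbf{L}p_1^* F \in \operatorname{Perf}(Y_1 \times_S Y_2)$ whenever $F \in \operatorname{Perf}(Y_1)$, and hypothesis (2) then gives $\Phi_K(F) = \mathbf{R}p_{2,*}(K \otimes^{\mathbf{L}} \mathbf{L}p_1^* F) \in \operatorname{Perf}(Y_2)$. For the $D^b_{\operatorname{coh}}$-preservation I would invoke the Neeman-style characterization (flagged in the abstract) of $D^b_{\operatorname{coh}}(Y_2)$ as those $G \in D_{\operatorname{qc}}(Y_2)$ for which $\operatorname{Hom}(P, G[n])$ defines a finite cohomological functor in $n$ as $P$ ranges over $\operatorname{Perf}(Y_2)$. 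For $F \in D^b_{\operatorname{coh}}(Y_1)$ and $P \in \operatorname{Perf}(Y_2)$, tensor--Hom adjunction, the projection formula, and perfectness of $\mathbf{L}p_2^* P$ yield
\begin{equation*}
\operatorname{Hom}_{Y_2}(P, \Phi_K(F)[n]) \;\cong\; \operatorname{Hom}_{Y_1}\bigl(Q_P^\vee,\, F[n]\bigr), \quad Q_P := \mathbf{R}p_{1,*}\bigl(K \otimes^{\mathbf{L}} (\mathbf{L}p_2^* P)^\vee\bigr),
\end{equation*}
where (2) supplies $Q_P \in \operatorname{Perf}(Y_1)$; the right-hand side is then a finite cohomological functor in $n$ since $F \in D^b_{\operatorname{coh}}(Y_1)$ and $Q_P^\vee \in \operatorname{Perf}(Y_1)$, giving $\Phi_K(F) \in D^b_{\operatorname{coh}}(Y_2)$.

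For (1) $\Rightarrow$ (2): the full subcategory $\mathcal{T}_i \subseteq \operatorname{Perf}(Y_1 \times_S Y_2)$ of objects $E$ with $\mathbf{R}p_{i,*}(K \otimes^{\mathbf{L}} E) \in \operatorname{Perf}(Y_i)$ is thick. Hypothesis (1), together with the projection formula identity $\mathbf{R}p_{2,*}(K \otimes \mathbf{L}p_1^* F_1 \otimes \mathbf{L}p_2^* F_2) \cong \Phi_K(F_1) \otimes F_2$, places all external tensor products $\mathbf{L}p_1^* F_1 \otimes^{\mathbf{L}} \mathbf{L}p_2^* F_2$ (with $F_j \in \operatorname{Perf}(Y_j)$) inside $\mathcal{T}_2$. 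For $\mathcal{T}_1$ I would read the same adjunction identity backwards as a constraint on $Q_P$: the $D^b_{\operatorname{coh}}$-preservation half of (1), interpreted via Neeman's characterization, forces $Q_P \in \operatorname{Perf}(Y_1)$ for every $P \in \operatorname{Perf}(Y_2)$, whence external tensors land in $\mathcal{T}_1$ as well. It then remains to see that external tensor products thickly generate $\operatorname{Perf}(Y_1 \times_S Y_2)$; since perfectness of a pushforward is local on the target, I would pass to affine opens $V \subseteq S$ and $U_j \subseteq f_j^{-1}(V)$, where $U_1 \times_V U_2$ is affine and the structure sheaf alone---itself an external tensor $\mathbf{L}p_1^* \mathcal{O}_{U_1} \otimes^{\mathbf{L}} \mathbf{L}p_2^* \mathcal{O}_{U_2}$---thickly generates $\operatorname{Perf}(U_1 \times_V U_2)$.

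For the consequence, preservation of both $\operatorname{Perf}(Y_1)$ and $D^b_{\operatorname{coh}}(Y_1)$ means that the composite $D^b_{\operatorname{coh}}(Y_1) \xrightarrow{\Phi_K} D^b_{\operatorname{coh}}(Y_2) \to D_{\operatorname{sg}}(Y_2)$ annihilates $\operatorname{Perf}(Y_1)$, and the universal property of the Verdier quotient $D_{\operatorname{sg}}(Y_1) = D^b_{\operatorname{coh}}(Y_1)/\operatorname{Perf}(Y_1)$ then produces a unique exact $\breve{\Phi_K}$. The hardest step will be the (1) $\Rightarrow$ (2) direction, specifically the global-to-local passage---namely, that affine-local thick generation by the structure sheaf is enough to detect global perfectness of the pushforward---for which the Neeman approximability framework flagged in the abstract is the crucial input.
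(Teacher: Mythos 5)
Your (2)$\Rightarrow$(1) direction is sound, and for the $D^b_{\operatorname{coh}}$-preservation is arguably more direct than the paper's route: since $\Phi_K(F)$ already lies in $D^-_{\operatorname{coh}}(Y_2)$ by properness of $p_2$, boundedness follows from your identity $\operatorname{Hom}_{Y_2}(P,\Phi_K(F)[n])\cong\operatorname{Hom}_{Y_1}(Q_P^\vee,F[n])$ together with vanishing against a compact generator for $n\ll 0$ --- one does not even need the full ``finite cohomological functor'' equivalence for this half, only the boundedness criterion of \cite[Tag 0GEQ]{StacksProject}. The (1)$\Rightarrow$(2) step for $i=2$ is likewise correct, but for the thick-generation input you should simply cite \cite[Lemma 3.4.1]{Bondal/VandenBergh:2003}; your proposed affine-local substitute is a non-starter, since thick generation is not a local property, and in any case $p_1^{-1}(U_1)=U_1\times_S Y_2$ is not affine, so restricting to charts $U_1\times_V U_2$ does not reduce the computation of $\mathbf{R}p_{1,\ast}$ to a pushforward between affine schemes.

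The genuine gap is (1)$\Rightarrow$(2) with $i=1$, which is the hard direction of the theorem. You assert that $D^b_{\operatorname{coh}}$-preservation, ``interpreted via Neeman's characterization, forces $Q_P\in\operatorname{Perf}(Y_1)$,'' but give no argument, and this does not follow from any characterization of $D^b_{\operatorname{coh}}(Y_2)$. Without already knowing $Q_P$ is perfect, your adjunction identity only reads $\operatorname{Hom}_{Y_2}(P,\Phi_K(F)[n])\cong\operatorname{Hom}_{Y_1}(\mathcal{O}_{Y_1},Q_P\otimes^{\mathbf{L}}F[n])$, which sees only $\mathbf{R}\Gamma(Q_P\otimes^{\mathbf{L}}F)$ and can lose information on a non-affine $Y_1$; moreover $Q_P$ is a priori only in $D^-_{\operatorname{coh}}(Y_1)$. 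The paper's proof of this implication carries the real weight of the theorem (\Cref{lem:pushforward_preserve_perfect_iff_pullback_coherent_bounded,lem:integral_transform_bounded_coherent_iff_perf_projection}): one first shows, by an Ext computation against the compact generator $\mathbf{L}p_1^\ast P_1\otimes^{\mathbf{L}}\mathbf{L}p_2^\ast P_2$ of $D_{\operatorname{qc}}(Y_1\times_S Y_2)$, that $K\otimes^{\mathbf{L}}\mathbf{L}p_1^\ast D^b_{\operatorname{coh}}(Y_1)\subseteq D^b_{\operatorname{coh}}(Y_1\times_S Y_2)$ (in particular $K\in D^b_{\operatorname{coh}}$), and then invokes the criterion of \cite[Theorem 2.3.(3)]{AlonsoTarrio/JeremiasLopez/SanchodeSalas:2023} that an object of $D^b_{\operatorname{coh}}$ whose derived tensor endofunctor preserves $D^b_{\operatorname{coh}}$ must be perfect. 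That criterion is where perfectness of $\mathbf{R}p_{1,\ast}(K\otimes^{\mathbf{L}}P)$ actually comes from, and nothing in your sketch supplies a substitute for it; you also omit the reduction from general Noetherian $S$ to affine $S$ (\Cref{prop:preserves_bounded_coherence_non_affine_base_scheme}), which is needed before \Cref{lem:approx_by_proper_lift_functors} and Neeman's framework can be brought to bear at all.
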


\Cref{introthm:integral_transform_induced_singularity} was previously known for projective varieties over a field (see \cite[Lemmas 3.17 \& 3.18]{Ballard:2009}), which was later extended to schemes that are projective over a Noetherian base (see \cite[Corollary 5.3]{Rizzardo:2017}). Our result upgrades to `proper' schemes over a Noetherian base. This allows for further study of integral transforms in cases of schemes that might be proper, but not projective over a base, e.g.\ proper varieties over a field which are not projective \cite{Nagata:1958, Schroer:1999, Oda:1988, Kollar:2006}. See \Cref{ex:bergh_schnurer_stacks} for cases when the base scheme is not affine (i.e.\ `relative' settings) or in mixed characteristic settings.

One key ingredient in the proof of \Cref{introthm:integral_transform_induced_singularity} is a particular description of $D^b_{\operatorname{coh}}(-)$ for a Noetherian proper scheme. Specifically, the objects can be viewed as finite cohomological functors on the category of perfect complexes. This allows one to reduce many arguments to the affine situation. The description is an application of Neeman's recent work on approximable triangulated categories \cite{Neeman:2021c}. See \Cref{lem:approx_by_proper_lift_functors} for details. 

\subsection{Some consequences}
\label{sec:intro_some_consequences}

Now we highlight applications of our techniques that were developed to prove \Cref{introthm:integral_transform_induced_singularity}. A next goal is to start using such characterizations to extract geometric information from the derived categories of singular varieties. We start with a natural generalization of \cite[Lemma's 3.22 \& 3.23]{Ballard:2009} to the setting of proper schemes over an affine base. 

\begin{introprop}
    [see \Cref{prop:integral_transform_on_dbcoh_iff_left_adjoint}]
    \label{introprop:integral_transform_on_dbcoh_iff_left_adjoint}
    Let $f_1 \colon Y_1 \to S$ and $f_2 \colon Y_2 \to S$ be proper morphisms with $S$ an affine Noetherian scheme. Consider the fibered square:
    \begin{displaymath}
        \begin{tikzcd}[ampersand replacement=\&]
            {Y_1\times_S Y_2} \& {Y_2} \\
            {Y_1} \& S
            \arrow["{p_2}", from=1-1, to=1-2]
            \arrow["{p_1}"', from=1-1, to=2-1]
            \arrow["\lrcorner"{anchor=center, pos=0.125}, draw=none, from=1-1, to=2-2]
            \arrow["{f_2}", from=1-2, to=2-2]
            \arrow["{f_1}"', from=2-1, to=2-2]
        \end{tikzcd}
    \end{displaymath}
    Then the following are equivalent for any $K\in D^-_{\operatorname{coh}}(Y_1\times_S Y_2)$:
    \begin{enumerate}
        \item $\Phi_K\colon D_{\operatorname{qc}}(Y_1) \to D_{\operatorname{qc}}(Y_2)$ admits a left adjoint
        \item $\Phi_K \colon D^b_{\operatorname{coh}}(Y_1) \to D_{\operatorname{qc}}(Y_2)$ factors through $D^b_{\operatorname{coh}}(Y_2)$. 
    \end{enumerate}
    In such a situation, $\Phi_{K^\prime}$ is the left adjoint to $\Phi_K$ on $D_{\operatorname{qc}}$, 
    where $K^\prime:= \mathbf{R} \operatorname{\mathcal{H}\! \mathit{om}} (K, p^!_1 \mathcal{O}_{Y_1})$. 
\end{introprop}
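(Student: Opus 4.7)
My plan is to prove the two implications separately; the identification of the left adjoint as $\Phi_{K'}$ will fall out of the second direction.

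\medskip\noindent
\textbf{Direction $(1)\Rightarrow(2)$.} First note that $\Phi_K$ preserves coproducts, hence filtered colimits: $Lp_1^\ast$ preserves all colimits, $K\otimes^{\mathbf{L}}(-)$ is a left adjoint, and $\mathbf{R}p_{2,\ast}$ preserves coproducts on $D_{\operatorname{qc}}$ because $p_2$ is quasi-compact quasi-separated. By the usual formal argument (compactness of $P$ combined with the adjunction isomorphism and filtered-colimit preservation of $\Phi_K$), any left adjoint $\Psi$ to $\Phi_K$ must send compact objects to compact objects, so $\Psi(\operatorname{Perf}(Y_2))\subseteq\operatorname{Perf}(Y_1)$. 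Fix $G\in D^b_{\operatorname{coh}}(Y_1)$; for each $F\in\operatorname{Perf}(Y_2)$, adjunction gives
\[
\mathrm{Hom}_{Y_2}\bigl(F,\Phi_K G[n]\bigr)\;\cong\;\mathrm{Hom}_{Y_1}\bigl(\Psi F,G[n]\bigr),
\]
and the right side is the $n$th cohomology of $\mathbf{R}f_{1,\ast}\mathbf{R}\operatorname{\mathcal{H}\! \mathit{om}}(\Psi F,G)\in D^b_{\operatorname{coh}}(S)$, hence a finitely generated $\Gamma(S,\mathcal{O}_S)$-module vanishing for $|n|\gg 0$. Thus $\Phi_K G$ represents a finite cohomological functor on $\operatorname{Perf}(Y_2)$, and \Cref{lem:approx_by_proper_lift_functors} forces $\Phi_K G\in D^b_{\operatorname{coh}}(Y_2)$.

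\medskip\noindent
\textbf{Direction $(2)\Rightarrow(1)$.} Take $\Psi\colonequals\Phi_{K'}$ with $K'=\mathbf{R}\operatorname{\mathcal{H}\! \mathit{om}}(K,p_1^!\mathcal{O}_{Y_1})$ as the candidate. Using Grothendieck duality $\mathbf{R}p_{1,\ast}\dashv p_1^!$ (valid since $p_1$ is proper, as a base change of $f_2$), tensor-hom, and $Lp_2^\ast\dashv\mathbf{R}p_{2,\ast}$, I would assemble a canonical natural transformation
\[
\Theta_{F,G}\colon \mathrm{Hom}_{Y_1}\bigl(\Phi_{K'}F,G\bigr)\longrightarrow\mathrm{Hom}_{Y_2}\bigl(F,\Phi_K G\bigr).
\]
Since both $\Phi_{K'}$ and $\Phi_K$ preserve coproducts, for each fixed $F$ both sides send coproducts in $G$ to products; by compact generation of $D_{\operatorname{qc}}(Y_1)$, it suffices to verify $\Theta_{F,G}$ is an isomorphism for $G\in\operatorname{Perf}(Y_1)$. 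For such $G$, dualizability of $Lp_1^\ast G$, the projection-formula identity $p_1^!G\cong Lp_1^\ast G\otimes^{\mathbf{L}}p_1^!\mathcal{O}_{Y_1}$, and biduality of the pseudo-coherent $K$ with respect to $p_1^!\mathcal{O}_{Y_1}$ together identify both sides with $\mathrm{Hom}_{Y_1\times_SY_2}\bigl(Lp_2^\ast F\otimes^{\mathbf{L}}(Lp_1^\ast G)^\vee,K\bigr)$; hypothesis (2) ensures the manipulations remain in the subcategories where Grothendieck--Verdier duality operates cleanly.

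\medskip\noindent
\textbf{Main obstacle.} The delicate step is showing $\Theta$ is an isomorphism on perfect $G$: it rests on the base-change identity $p_1^!P\cong Lp_1^\ast P\otimes^{\mathbf{L}}p_1^!\mathcal{O}_{Y_1}$ for $P$ perfect and on biduality of $K$ against $p_1^!\mathcal{O}_{Y_1}$, both of which need care over an affine Noetherian base that need not carry a global dualizing complex. Hypothesis (2) enters precisely by confining the biduality calculation to $D^b_{\operatorname{coh}}$. Once the isomorphism is confirmed on $\operatorname{Perf}(Y_1)$, its extension to arbitrary $G\in D_{\operatorname{qc}}(Y_1)$ is formal via coproduct preservation.
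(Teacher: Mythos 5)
Your $(1)\Rightarrow(2)$ argument is essentially the paper's: you observe $\Phi_K$ preserves coproducts (since it admits an explicit right adjoint), deduce via Neeman that the left adjoint $\Psi$ preserves compacts, and then show $\Phi_K G$ has a finite cohomological restricted Yoneda functor on $\operatorname{Perf}(Y_2)$ — your identification of $\operatorname{Hom}(\Psi F,G[n])$ with the $n$th cohomology of $\mathbf{R}f_{1,\ast}\mathbf{R}\operatorname{\mathcal{H}\! \mathit{om}}(\Psi F,G)\in D^b_{\operatorname{coh}}(S)$ is a nice explicit way to see finiteness. The final step (that an object of $D_{\operatorname{qc}}(Y_2)$ with finite cohomological restricted Yoneda must lie in $D^b_{\operatorname{coh}}(Y_2)$) is really Neeman's Example 0.7 rather than the statement of \Cref{lem:approx_by_proper_lift_functors} as written, but the paper leans on the same content, so this is fine.

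Your $(2)\Rightarrow(1)$ direction, however, has a genuine gap, and you flag it yourself. The construction of $\Theta_{F,G}$ and the verification that it is an isomorphism — via the base-change identity $p_1^!P\cong \mathbf{L}p_1^\ast P\otimes^{\mathbf{L}}p_1^!\mathcal{O}_{Y_1}$ for $P$ perfect and biduality of the pseudocoherent $K$ against $p_1^!\mathcal{O}_{Y_1}$ — is precisely the content of \cite[Lemma 3.10]{Ballard:2009}, which the paper cites outright rather than reproving. What you propose would amount to rederiving that lemma from scratch, and the step you describe as the ``main obstacle'' is exactly where the work lies; leaving it as an acknowledged unresolved step means the direction is not established. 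Two further points. First, the paper's converse direction first invokes \Cref{lem:integral_transform_bounded_coherent_iff_perf_projection} to deduce $\mathbf{R}p_{1,\ast}(K\otimes^{\mathbf{L}}\operatorname{Perf}(Y_1\times_S Y_2))\subseteq\operatorname{Perf}(Y_1)$; you omit this, yet it is what guarantees the candidate $\Phi_{K'}$ preserves compacts, which your reduction to perfect $G$ implicitly relies on. Second, the phrase ``both sides send coproducts in $G$ to products'' is a slip: for fixed $F$ the functors $G\mapsto\operatorname{Hom}(\Phi_{K'}F,G)$ and $G\mapsto\operatorname{Hom}(F,\Phi_K G)$ preserve coproducts when $\Phi_{K'}F$ and $F$ are compact, and that compactness is exactly what needs to be supplied (e.g.\ via the lemma above) before the reduction to compact generators is legitimate.
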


\Cref{introprop:integral_transform_on_dbcoh_iff_left_adjoint} connects the existence of a left adjoint with being able to preserve objects with bounded and coherent cohomology. This becomes a useful tool for studying integral transforms on singular varieties. Indeed, such adjoints are important for studying fully faithful integral transforms between smooth projective varieties as they allow one to infer bounds on Krull dimension of the varieties \cite{Olander:2023}. 

We push such results to varieties with mild singularities. Recall a variety $X$ over $\mathbb{C}$ is said to have \textit{rational singularities} if $\mathcal{O}_X \xrightarrow{ntrl.} \mathbf{R}f_\ast \mathcal{O}_{\widetilde{X}}$ is an isomorphism for $f\colon \widetilde{X} \to X$ a resolution of singularities. This includes toric varieties, quotient singularities on surfaces, and more.

To the best of our knowledge, the following is new and shows us one way as to how integral transforms can be used to extract geometric information in the singular setting.

\begin{introprop}
    \label{introprop:extending_olander_splinters}
    Let $Y_1$ and $Y_2$ be proper, both with rational singularities, over $\mathbb{C}$. Suppose there is $K\in D^-_{\operatorname{coh}}(Y_1\times_k Y_2)$ such that $\Phi_K\colon D^b_{\operatorname{coh}}(Y_1) \to D^b_{\operatorname{coh}}(Y_2)$ is fully faithful.
    If $\Phi_K(\operatorname{Perf}(Y_1)) \subseteq \operatorname{Perf}(Y_2)$, then $\dim Y_1 \leq \dim Y_2$.
\end{introprop}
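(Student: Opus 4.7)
The plan is to adapt Olander's dimension bound from the smooth projective setting to the rational-singularities context by pushing the image of a point-like perfect complex along a resolution of $Y_2$. The two key inputs are (i) the Koszul computation of $\operatorname{Ext}^{\ast}(\mathcal{O}_x, \mathcal{O}_x)$ at a smooth closed point and (ii) the fact that rational singularities yield a fully faithful pullback $\mathbf{L}\pi^\ast\colon D^b_{\operatorname{coh}}(Y_2) \hookrightarrow D^b_{\operatorname{coh}}(\tilde{Y}_2)$ along a resolution $\pi\colon \tilde{Y}_2 \to Y_2$.

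First, since $Y_1$ has rational singularities it is normal, so its smooth locus is a nonempty open subscheme; I would pick a closed point $x$ there. The Koszul resolution at a smooth closed point gives $\operatorname{Ext}^{\ast}_{Y_1}(\mathcal{O}_x, \mathcal{O}_x) \cong \bigwedge^{\ast} T^{\ast}_x Y_1$, concentrated in cohomological degrees $[0, \dim Y_1]$ with one-dimensional top-degree piece, and $\mathcal{O}_x \in \operatorname{Perf}(Y_1)$. The hypothesis gives $E := \Phi_K(\mathcal{O}_x) \in \operatorname{Perf}(Y_2)$, and full faithfulness of $\Phi_K$ on $D^b_{\operatorname{coh}}$ transports the Koszul Ext-algebra onto $E$. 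In particular $\operatorname{End}(E) = \mathbb{C}$, $\operatorname{Ext}^i_{Y_2}(E, E) = 0$ outside $[0, \dim Y_1]$, and $\operatorname{Ext}^{\dim Y_1}_{Y_2}(E, E) \neq 0$.

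Next, let $\pi\colon \tilde{Y}_2 \to Y_2$ be a resolution of singularities, available over $\mathbb{C}$ by Hironaka, so that $\tilde{Y}_2$ is smooth and proper of dimension $\dim Y_2$. The rational-singularities hypothesis on $Y_2$ gives $\mathbf{R}\pi_\ast \mathcal{O}_{\tilde{Y}_2} \cong \mathcal{O}_{Y_2}$, and combined with the projection formula this yields $\mathbf{R}\pi_\ast \mathbf{L}\pi^\ast F \cong F$ for every $F \in D^b_{\operatorname{coh}}(Y_2)$. Hence $\mathbf{L}\pi^\ast$ is fully faithful and sends perfect complexes to perfect complexes, so $\tilde{E} := \mathbf{L}\pi^\ast E \in \operatorname{Perf}(\tilde{Y}_2)$ and the isomorphism $\operatorname{Ext}^{\ast}_{\tilde{Y}_2}(\tilde{E}, \tilde{E}) \cong \operatorname{Ext}^{\ast}_{Y_2}(E, E)$ realizes the Koszul Ext-algebra on the smooth proper variety $\tilde{Y}_2$. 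Equivalently, the composite $\mathbf{L}\pi^\ast \circ \Phi_K\colon D^b_{\operatorname{coh}}(Y_1) \to D^b_{\operatorname{coh}}(\tilde{Y}_2)$ is fully faithful with smooth proper target.

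Finally, I would invoke Olander's theorem \cite{Olander:2023} in its smooth proper form: the crucial step of that argument says that for any $\tilde{E} \in \operatorname{Perf}(\tilde{Y}_2)$ with $\operatorname{End}(\tilde{E}) = \mathbb{C}$, $\operatorname{Ext}^i(\tilde{E}, \tilde{E}) = 0$ for $i < 0$, and $\operatorname{Ext}^N(\tilde{E}, \tilde{E}) \neq 0$, one must have $N \leq \dim \tilde{Y}_2$. Applying this with $N = \dim Y_1$ yields $\dim Y_1 \leq \dim \tilde{Y}_2 = \dim Y_2$. The principal obstacle is precisely this last step---the Ext-degree bound on the smooth proper variety $\tilde{Y}_2$---which is the technical heart of Olander's result. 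The remaining ingredients (choosing a smooth closed point of $Y_1$, the Koszul computation, the fully faithful pullback coming from rational singularities, and preservation of perfect complexes under $\mathbf{L}\pi^\ast$) are routine.
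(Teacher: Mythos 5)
Your proposal takes a genuinely different route from the paper, and the key step on which it rests is not actually justified as written.

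The paper's argument runs through \emph{countable Rouquier dimension}: one first observes that $\Phi_K$ (having a right adjoint on $D_{\operatorname{qc}}$ and, by the perfectness hypothesis, preserving $\operatorname{Perf}$) gives via \Cref{lem:approx_by_proper_lift_functors} a right adjoint $\Phi\colon D^b_{\operatorname{coh}}(Y_2) \to D^b_{\operatorname{coh}}(Y_1)$; full faithfulness of $\Phi_K$ makes $\Phi$ a Verdier localization; and since the birational-derived-splinter condition (i.e.\ rational singularities over $\mathbb{C}$) together with \cite[Lemma 7]{Olander:2023} gives $\operatorname{Cdim} D^b_{\operatorname{coh}}(Y_i) = \dim Y_i$, the monotonicity of $\operatorname{Cdim}$ under Verdier localization yields $\dim Y_1 \leq \dim Y_2$. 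Your route instead transports the Koszul $\operatorname{Ext}$-algebra of $\mathcal{O}_x$ for a smooth closed point $x \in Y_1$ along $\Phi_K$ and then along $\mathbf{L}\pi^\ast$ for a resolution $\pi\colon \widetilde{Y}_2 \to Y_2$; that part is sound, and the observation that $\mathbf{R}\pi_\ast\mathbf{L}\pi^\ast E \cong E$ for perfect $E$ via the projection formula and rational singularities is the right mechanism.

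The gap is in the final step. You attribute to \cite{Olander:2023} the claim that for a nonzero $\widetilde{E} \in \operatorname{Perf}(\widetilde{Y}_2)$ with $\operatorname{End}(\widetilde{E}) = \mathbb{C}$, $\operatorname{Ext}^{<0}(\widetilde{E},\widetilde{E}) = 0$, and $\operatorname{Ext}^N(\widetilde{E},\widetilde{E}) \neq 0$, one has $N \leq \dim\widetilde{Y}_2$, calling this ``the crucial step of that argument.'' That is not what Olander's argument is: his Lemma~7 and Proposition~9 go through countable Rouquier dimension exactly as the paper does, and there is no such pointwise $\operatorname{Ext}$-bound in that paper. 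The $\operatorname{Ext}$-bound is also not a routine fact to take for granted: Serre duality only gives $\operatorname{Ext}^N(\widetilde{E},\widetilde{E}) \cong \operatorname{Ext}^{\dim\widetilde{Y}_2 - N}(\widetilde{E}, \widetilde{E}\otimes\omega)^\ast$, which does not vanish for $N > \dim\widetilde{Y}_2$ merely from $\operatorname{Ext}^{<0}(\widetilde{E},\widetilde{E}) = 0$, since the target is twisted by $\omega_{\widetilde{Y}_2}$. Unless you can either prove this bound or supply a correct reference for it (it is not \cite{Olander:2023}), this step is a genuine hole, and the cleaner path is the one the paper takes via Verdier localizations and $\operatorname{Cdim}$.
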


\Cref{introprop:extending_olander_splinters} extends \cite{Olander:2023} to a class of varieties with mild singularities. Observe that \Cref{introprop:extending_olander_splinters} tells us $\dim Y_1 = \dim Y_2$ when $\Phi_K$ is an equivalence. Its proof becomes straightforward once the existence of adjoints is sorted from our work in the manuscript as one then piggy backs off \cite[Proposition 6.8]{DeDeyn/Lank/ManaliRahul:2024b}. Moreover, we prove a more general result for varieties over arbitrary fields whose singularities are called \textit{birational derived splinters} (\`{a} la Kov\'{a}cs \cite{Lank:2025}); see \Cref{prop:extending_olander_splinters} for details. 

Lastly we determine a property that cannot be enjoyed by a kernel of an integral transform that induces an equivalence of $D^b_{\operatorname{coh}}$ between singular varieties.

\begin{introprop}
    Let $Y_1,Y_2$ be proper varieties over $\mathbb{C}$. Suppose $Y_1$ and $Y_2$ are Fourier--Mukai partners given by a kernel $K$ in $D^b_{\operatorname{coh}}(Y_1\times_S Y_2)$. If $Y_1$ or $Y_2$ is not smooth, then $K\not\in\operatorname{Perf}(Y_1\times_S Y_2)$.
\end{introprop}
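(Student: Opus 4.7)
The plan is to argue by contradiction: suppose $K \in \operatorname{Perf}(Y_1 \times_S Y_2)$. First I would establish that any triangulated equivalence $F \colon D^b_{\operatorname{coh}}(Y_1) \xrightarrow{\sim} D^b_{\operatorname{coh}}(Y_2)$ automatically restricts to an equivalence $\operatorname{Perf}(Y_1) \simeq \operatorname{Perf}(Y_2)$. The key point is the intrinsic criterion that $E \in D^b_{\operatorname{coh}}(Y)$ lies in $\operatorname{Perf}(Y)$ if and only if $\mathbf{R}\operatorname{Hom}(E,G)$ has bounded cohomology for every $G \in D^b_{\operatorname{coh}}(Y)$; the nontrivial direction tests $G = k(y)$ and invokes Auslander--Buchsbaum--Serre. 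Since this criterion is manifestly transported by $F$, the perfect subcategories correspond, and in particular $Y_1$ is regular if and only if $Y_2$ is. So, without loss of generality, assume $Y_1$ is singular; since $Y_1$ is a complex variety (and excellent), I may fix a closed non-regular point $y_1 \in Y_1$, for which $k(y_1) \notin \operatorname{Perf}(Y_1)$.

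The core computation is to identify $\Phi_K(k(y_1))$. Let $\iota \colon Y_2 \cong \{y_1\} \times_S Y_2 \hookrightarrow Y_1 \times_S Y_2$ denote the inclusion of the fibre of $p_1$ over $y_1$. Since $p_1$ is flat (base-changed from $Y_2 \to S$), flat base change along $y_1 \hookrightarrow Y_1$ gives $\mathbf{L}p_1^\ast k(y_1) \simeq \iota_\ast \mathcal{O}_{Y_2}$; the projection formula then yields $\iota_\ast \mathcal{O}_{Y_2} \otimes^{\mathbf{L}} K \simeq \iota_\ast(\mathbf{L}\iota^\ast K)$, and since $p_2 \circ \iota$ is an isomorphism, applying $\mathbf{R}p_{2,\ast}$ collapses everything to
\[
\Phi_K(k(y_1)) \simeq \mathbf{L}\iota^\ast K.
\]
Derived pullback preserves perfect complexes, so the hypothetical perfectness of $K$ would force $\Phi_K(k(y_1)) \in \operatorname{Perf}(Y_2)$. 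The restriction of $\Phi_K$ to perfect subcategories established in the first paragraph then forces $k(y_1) \in \operatorname{Perf}(Y_1)$, contradicting the choice of $y_1$.

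I expect the main obstacle to be carrying out the derived base-change and projection-formula identification $\Phi_K(k(y_1)) \simeq \mathbf{L}\iota^\ast K$ cleanly --- a routine cartesian-square manipulation, but one that must be executed consistently in the derived category. Once that identification is in hand, the contradiction with the preservation of perfect complexes under a triangulated equivalence of $D^b_{\operatorname{coh}}$ is immediate.
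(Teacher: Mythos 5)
Your proposal is correct, but its key computation differs from the paper's. The paper proves the more general \Cref{prop:obstruction_to_singular_equivalence} (proper schemes over any regular quasi-compact Noetherian base $S$, with at least one structure morphism flat): after citing \cite[Corollary 5.9]{Canonaco/Neeman/Stellari:2024} for the derived-invariance of regularity, it runs a Bondal--van den Bergh generator argument (the box-product generator of $\operatorname{Perf}(Y_1\times_S Y_2)$ combined with projection formula and flat base change over $S$) to show that perfectness of $K$ would force $\Phi_K(D^b_{\operatorname{coh}}(Y_1))\subseteq\operatorname{Perf}(Y_2)$, hence $D^b_{\operatorname{coh}}(Y_2)=\operatorname{Perf}(Y_2)$, a contradiction. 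You instead test $\Phi_K$ on a single skyscraper sheaf $k(y_1)$ at a singular closed point: flat base change along $p_1$ and the projection formula give the classical identification $\Phi_K(k(y_1))\cong\mathbf{L}\iota^\ast K$, which is perfect whenever $K$ is, and this, together with the intrinsic characterization of $\operatorname{Perf}$ inside $D^b_{\operatorname{coh}}$ (boundedness of $\operatorname{Ext}$'s against all objects), forces $k(y_1)\in\operatorname{Perf}(Y_1)$, a contradiction. Your first paragraph re-derives the fact that regularity is a derived invariant directly via that same intrinsic criterion rather than citing \cite{Canonaco/Neeman/Stellari:2024}; this step is genuinely needed in your argument so that you may reduce, without loss of generality, to $Y_1$ singular, since the inverse Fourier--Mukai equivalence is not obviously given by a perfect kernel. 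Your route is more concrete and elementary (no generator theory), but it leans on $\kappa(y_1)=\mathbb{C}$ so that the fibre $\{y_1\}\times_S Y_2$ identifies with $Y_2$; over $\mathbb{C}$ this is automatic, whereas the generator argument of the paper is precisely what allows the statement over an arbitrary regular base.
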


This result is proven more generally for proper schemes over a regular base. It gives a slightly better understanding for kernels of integral transforms inducing triangulated equivalences between schemes that have singularities. We are not aware of many statements that tell us properties of kernels as such for singular varieties. So one can take the result above as first strides towards a consequence of our proposed program. See \Cref{prop:obstruction_to_singular_equivalence} for detail.

\subsection{Notation}
\label{sec:intro_notation}

Let $X$ be a Noetherian scheme. Then, we have the following triangulated categories associated to $X$:
\begin{enumerate}
    \item $D(X):=D(\operatorname{Mod}(X))$ is the derived category of $\mathcal{O}_X$-modules.
    \item $D_{\operatorname{qc}}(X)$ is the (strictly full) subcategory of $D(X)$ consisting of complexes with quasi-coherent cohomology.
    \item $D_{\operatorname{coh}}^b(X)$ is the (strictly full) subcategory of $D(X)$ consisting of complexes having bounded and coherent cohomology.
    \item $\operatorname{Perf}(X)$ is the (strictly full) subcategory of $D_{\operatorname{qc}}(X)$ consisting of the perfect complexes on $X$.
\end{enumerate}
If $X$ is affine, then we might at times abuse notation and write $D(R):=D_{\operatorname{qc}}(X)$ where $R:=H^0(X,\mathcal{O}_X)$ is the ring of global sections; similar conventions will occur for the other categories.

\begin{ack}
    Pat Lank and Kabeer Manali-Rahul were both supported under the ERC Advanced Grant 101095900-TriCatApp. Manali-Rahul is a recipient of an Australian Government Research Training Program Scholarship, and would like to thank Universit\`{a} degli Studi di Milano for their hospitality during his stay there. Uttaran Dutta was supported under the National Science Foundation under Award No. 2302263. The authors would like to thank Timothy De Deyn and Nebojsa Pavic for discussions, as well as the anonymous referee for suggestions and improvements.
\end{ack}

\section{Preliminaries}
\label{sec:prelim}

This section gives a (very) brisk recap of the background material. The expert is encouraged to skip to the next section. We fix a Noetherian scheme $X$.

\subsection{Generation for triangulated categories}
\label{sec:prelim_generation}

Let $\mathcal{T}$ be a triangulated category with shift functor $[1]\colon \mathcal{T} \to \mathcal{T}$. We pull content from \cite{Bondal/VandenBergh:2003,Rouquier:2008}. Fix a subcategory $\mathcal{S}$ of $\mathcal{T}$. We denote by $\operatorname{add}(\mathcal{S})$ the strictly full\footnote{This means being closed under isomorphisms.} subcategory of $\mathcal{T}$ consisting of all direct summands of objects of the form $\oplus_{n\in \mathbb{Z}} S_n^{\oplus r_n}[n]$ where $S_n$ belongs to $\mathcal{S}$ for all $n$ and finitely many $r_n$ are nonzero. We inductively define the following subcategories: 
\begin{displaymath}
    \langle\mathcal{S}\rangle_n :=
    \begin{cases}
        \operatorname{add}(\varnothing) & \text{if }n=0, \\
        \operatorname{add}(\mathcal{S} )& \text{if }n=1, \\
        \operatorname{add}(\{ \operatorname{cone}\phi \mid \phi \in \operatorname{Hom}(\langle \mathcal{S}  \rangle_{n-1}, \langle \mathcal{S}  \rangle_1) \}) & \text{if }n>1.
    \end{cases}
\end{displaymath}
Furthermore, set
\begin{displaymath}
\langle\mathcal{S} \rangle:=\langle\mathcal{S} \rangle_\infty:=\bigcup_{n\geq 0} \langle\mathcal{S} \rangle_n.
\end{displaymath}
This is the smallest \textbf{thick} subcategory containing $\mathcal{S}$ (i.e.\ triangulated subcategory closed under direct summands). If $\mathcal{S}$ consist of a single object $G$, then we write the constructions above as $\langle G \rangle_n$ and $\langle G \rangle$. An object $G\in \mathcal{T}$ is called a \textbf{classical} (resp.\ \textbf{strong}) \textbf{generator} for $\mathcal{T}$ if $\langle G \rangle = \mathcal{T}$ (resp.\ $\langle G \rangle_{n} = \mathcal{T}$ for some $n\geq 0$). The \textbf{Rouquier dimension} of $\mathcal{T}$, denoted $\dim \mathcal{T}$, is the smallest $n\geq 0$ such that $\langle G \rangle_{n+1} = \mathcal{T}$ for some object $G$. Similarly, one defines the \textbf{countable Rouquier dimension} of $\mathcal{T}$, denoted by $\operatorname{Cdim}(\mathcal{T})$, as the smallest $n\geq 0$ such that $\mathcal{T} = \langle \mathcal{C}\rangle_{n+1}$ for $\mathcal{C}$ a countable collection of objects in $\mathcal{T}$.

\subsection{Perfect \& pseudocoherent complexes}
\label{sec:prelim_perfectness_pseudocoherence}

Recall an object in $D_{\operatorname{qc}}(X)$ is called \textbf{perfect} if it is locally quasi-isomorphic to a bounded complex of vector bundles. The full subcategory of such objects in $D_{\operatorname{qc}}(X)$ is denoted by $\operatorname{Perf}(X)$. One can check that $\operatorname{Perf}(X)$ is a triangulated subcategory of $D_{\operatorname{qc}}(X)$. It is connected to important geometric properties of a scheme. For example, the structure of this category can be used to detect whether $X$ is regular (e.g.\ \cite{DeDeyn/Lank/ManaliRahul:2024a, Neeman:2022, Clark/Lank/Parker/ManaliRahul:2024, Neeman:2021a}).

An object $E$ in $D_{\operatorname{qc}}(X)$ is called \textbf{pseudocoherent} if for each integer $N$ there is a map $P \to E$ from a perfect complex such that the induced map on cohomology sheaves $\mathcal{H}^n (P) \to \mathcal{H}^n (E)$ is an isomorphism for $n>N$ and is surjective for $n=N$. It follows from \cite[\href{https://stacks.math.columbia.edu/tag/08E8}{Tag 08E8}]{StacksProject} that an object $E$ of $D_{\operatorname{qc}}(X)$ is pseudocoherent if, and only if, $\mathcal{H}^n(E)$ is coherent for all $n$ and vanishes for $n\gg 0$. So, the subcategory of pseudocoherent objects in $D_{\operatorname{qc}}(X)$ coincides with $D^-_{\operatorname{coh}}(X)$; that is, the full subcategory of complexes with bounded above and coherent cohomology.

\begin{remark}\label{rmk:compact_generator_by_generation}
    Suppose $\mathcal{T}$ is a compactly generated triangulated category\footnote{We do assume that `$\mathcal{T}$ admitting all small coproducts' is part of the data to be compactly generated.}. Denote the collection of compact objects of $\mathcal{T}$ by $\mathcal{T}^c$. Recall an object $G$ of $\mathcal{T}^c$ is called a \textbf{compact generator} for $\mathcal{T}$ if for any object $F$ of $\mathcal{T}$, one has $\operatorname{Hom}(G,F[n]) = 0$ for all integers $n$ if and only if $F$ is the zero object. It is well known that a compact object is a classical generator for $\mathcal{T}^c$ if, and only if, it is a compact generator for $\mathcal{T}$, see for example \cite[\href{https://stacks.math.columbia.edu/tag/09SR}{Tag 09SR}]{StacksProject}.
\end{remark}

\begin{example}\label{ex:perfect_compacts_classical_generator}
    Let $X$ be a quasi-compact quasi-separated scheme. The collection of compact objects in $D_{\operatorname{qc}}(X)$ coincides with $\operatorname{Perf}(X)$. Moreover, $D_{\operatorname{qc}}(X)$ is compactly generated by a single object from $\operatorname{Perf}(X)$. See \cite[Theorem 3.1.1]{Bondal/VandenBergh:2003}.
\end{example}

There are two important Verdier localizations corresponding to an open immersion $r \colon V \to X$ of Noetherian schemes. The first is a Verdier localization $r^\ast \colon D^b_{\operatorname{coh}}(X) \to D^b_{\operatorname{coh}}(V)$ (see \cite[Theorem 4.4]{Elagin/Lunts/Schnurer:2020}). The second is a Bousfield localization sequence
\begin{displaymath}
    D_{\operatorname{Qcoh,X\setminus V}}(X) \to D_{\operatorname{qc}} (X) \xrightarrow{r^\ast} D_{\operatorname{qc}} (V), 
\end{displaymath}
which induces a Verdier localization (up to summands) $r^\ast \colon \operatorname{Perf}(X) \to \operatorname{Perf}(V)$ (see \cite[Theorem 2.1]{Neeman:1992}). Note that these imply that an object of $D_{\operatorname{qc}}(X)$ has bounded coherent cohomology (resp.\ is perfect) if, and only if, its restriction to each affine open satisfies the same property.

\subsection{Singularity category}
\label{sec:prelim_singularity_category}

The \textbf{singularity category} of $X$, denoted $D_{\operatorname{sg}}(X)$, is defined as the Verdier localization of $D^b_{\operatorname{coh}}(X)$ by $\operatorname{Perf}(X)$. This category was first introduced in the algebra setting \cite{Buchweitz/Appendix:2021}, and later rediscovered in the geometric setting by \cite{Orlov:2004}. One can check that $X$ is regular if, and only if, $D_{\operatorname{sg}}(X)$ is trivial. Hueristically, the structure of $D_{\operatorname{sg}}(X)$ `reflects' the singularities of $X$.

We say two Noetherian schemes are \textbf{derived equivalent} if their bounded derived categories of coherent sheaves are triangle equivalent. This is equivalent to detecting triangulated equivalences between their derived category of quasi-coherent sheaves and/or category of perfect complexes (see \cite[Corollary 5.4]{Canonaco/Neeman/Stellari:2024}). 

There has been recent attention to detecting triangulated equivalences between singularity categories, which leads to the notion: We say two Noetherian schemes $X,Y$ are \textbf{singular equivalent} if there is a triangulated equivalence $D_{\operatorname{sg}}(X) \to D_{\operatorname{sg}}(Y)$. This has been detected in various cases. See e.g.\ \cite{Kalck:2021, Knorrer:1987, Matsui:2019, Matsui/Takahashi:2017}. It follows from \cite[Corollary 5.8]{Canonaco/Neeman/Stellari:2024} that derived equivalences imply singular equivalences in our setting.

\subsection{Integral transforms}
\label{sec:prelim_integral_transform}

Let $f_1 \colon Y_1 \to S$ and $f_2 \colon Y_2 \to S$ be morphisms of finite type with $S$ a Noetherian scheme. Consider the fibered square:
\begin{displaymath}
    \begin{tikzcd}[ampersand replacement=\&]
        {Y_1\times_S Y_2} \& {Y_2} \\
        {Y_1} \& S
        \arrow["{p_2}", from=1-1, to=1-2]
        \arrow["{p_1}"', from=1-1, to=2-1]
        \arrow["\lrcorner"{anchor=center, pos=0.125}, draw=none, from=1-1, to=2-2]
        \arrow["{f_2}", from=1-2, to=2-2]
        \arrow["{f_1}"', from=2-1, to=2-2]
    \end{tikzcd}
\end{displaymath} 
The \textbf{integral $S$-transform} associated to an object $K$ in $D_{\operatorname{qc}}(Y_1\times_S Y_2)$ is the functor $\Phi_K$ from $D_{\operatorname{qc}}(Y_1)$ to $D_{\operatorname{qc}}(Y_2)$ given by $\mathbf{R}p_{2,\ast} (\mathbf{L}p_1^\ast (-) \otimes^\mathbf{L} K)$. We will drop the hyphen `$S$-' if it is clear from context. One says $Y_1$ and $Y_2$ are \textbf{Fourier--Mukai $S$-partners} if there is such a $K$ for which $\Phi_K$ yields an equivalence $D^b_{\operatorname{coh}}(Y_1) \to D^b_{\operatorname{coh}}(Y_2)$.

This notion was first introduced in \cite{Mukai:1981} for varieties. These functors have been studied for when they restrict, induce equivalences, or admit adjoints on other subcategories (e.g.\ $D^b_{\operatorname{coh}}(-)$ or $\operatorname{Perf}(-))$. See e.g.\ \cite[\href{https://stacks.math.columbia.edu/tag/0FYP}{Tag 0FYP}]{StacksProject}.

A complete list of references on integral transforms is not reasonable, but we highlight a few for the readers interest: \cite{Orlov:1997, Ballard:2009, Ruiperez/Martin/deSalas:2007, Ruiperez/Martin/deSalas:2009, Rizzardo:2017, Rizzardo/VandenBergh/Neeman:2019, Rizzardo/VandenBergh:2015}. These have also been studied in the setting of algebraic stacks (see \cite[$\S 3$]{Bergh/Schnurer:2020} or \cite{Hall/Priver:2024}). See \cite{Huybrechts:2006,Mukai:1981} for further background.

We record a few cases of interest where singular equivalences can be detected through integral transforms.

\begin{example}
    \hfill
    \begin{enumerate}
        \item Let $X$ be a quasi-projective variety over a field. Suppose $j\colon U \to X$ is an open immersion such that the singular locus of $X$ is contained in $U$. Then $j^\ast \colon D^b_{\operatorname{coh}}(X) \to D^b_{\operatorname{coh}}(U)$ induces a triangulated equivalence $\breve{j}^\ast \colon D_{\operatorname{sg}}(X) \to D_{\operatorname{sg}}(U)$. See \cite[Corollary 2.3]{Chen:2010} or \cite[Proposition 1.14]{Orlov:2004}. Observe that the derived pullback $j^\ast\colon D_{\operatorname{qc}}(X) \to D_{\operatorname{qc}}(U)$ can be realized as the integral transform associated to the graph of $j$.
        \item Let $Y$ be a smooth quasi-projective variety over a field $k$ and $f\colon Y \to \mathbb{A}^1_k$ be a nonzero morphism. Define $g= f+xy \colon Y\times_k \mathbb{A}^2_k \to \mathbb{A}^1_k$, $Z_f := f^{-1}(\{0\})$,  $Z_g := g^{-1} (\{0 \})$, and $W:= Z_f\times_k \{0\} \times_k \mathbb{A}^1_k$ (in $Y\times_k \mathbb{A}^2_k$). Denote by $i\colon W \to Z_g$ the inclusion and $p\colon W\to Z_f$ the flat projection. Then $\mathbf{R}i_\ast p^\ast \colon D_{\operatorname{sg}}(Z_f) \to D_{\operatorname{sg}}(Z_g)$ is a triangulated equivalence, see \cite[Theorem 2.1]{Orlov:2004}. This can be expressed as a composition of integral transforms.
    \end{enumerate}
\end{example}

\section{Preservation}
\label{sec:preservation}

We identify necessary and sufficient conditions under which integral transforms preserve perfect complexes and/or objects with bounded and coherent cohomology. Our work begins with the following important lemma.

\begin{lemma}
    \label{lem:approx_by_proper_lift_functors}
    Let $Y_1$ and $Y_2$ be schemes that are proper over an affine Noetherian scheme $\operatorname{Spec}(R)$. Then, for any exact functor $\Phi\colon \operatorname{Perf}(Y_1) \to \operatorname{Perf}(Y_2)$, there is a unique exact functor $\Phi^\prime \colon D^b_{\operatorname{coh}}(Y_2) \to D^b_{\operatorname{coh}}(Y_1)$, with natural isomorphisms
    \begin{displaymath}
        \operatorname{Hom}(\Phi(A),B)\cong \operatorname{Hom}(A,\Phi^\prime(B))
    \end{displaymath}
    for any $A$ in $\operatorname{Perf}(Y_1)$ and $B$ in $D^b_{\operatorname{coh}}(Y_2)$.
\end{lemma}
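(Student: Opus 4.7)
The approach rests on a representability theorem coming from Neeman's theory of approximable triangulated categories: for a scheme $Y$ proper over a Noetherian affine base $\operatorname{Spec}(R)$, the contravariant Yoneda pairing induces an equivalence between $D^b_{\operatorname{coh}}(Y)$ and the category of ``locally finite'' (what the introduction calls ``finite'') cohomological functors $\operatorname{Perf}(Y)^{\mathrm{op}} \to \operatorname{Mod}(R)$, where the finiteness condition amounts to: evaluated on a classical generator, the values are finitely generated $R$-modules which vanish outside a bounded range of shifts. The plan is to feed the functor $\operatorname{Hom}_{Y_2}(\Phi(-), B)$ into this equivalence and read off $\Phi'(B)$.

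More precisely, fix $B \in D^b_{\operatorname{coh}}(Y_2)$ and set
\[
H_B \colonequals \operatorname{Hom}_{Y_2}(\Phi(-), B) \colon \operatorname{Perf}(Y_1)^{\mathrm{op}} \to \operatorname{Mod}(R).
\]
Since $\Phi$ is exact, $H_B$ is cohomological. I would first verify the finiteness condition. Choosing a classical generator $G$ of $\operatorname{Perf}(Y_1)$ (which exists by \Cref{ex:perfect_compacts_classical_generator}), the object $\Phi(G)$ is perfect on $Y_2$, while $B \in D^b_{\operatorname{coh}}(Y_2)$, so $\mathbf{R}\operatorname{\mathcal{H}\! \mathit{om}}_{Y_2}(\Phi(G), B)$ is a bounded pseudocoherent complex; properness of $Y_2 \to \operatorname{Spec}(R)$ then guarantees that its derived pushforward lies in $D^b_{\operatorname{coh}}(R)$. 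Consequently, $H_B(G[n]) = \operatorname{Hom}_{Y_2}(\Phi(G), B[n])$ is a finitely generated $R$-module for every $n$, vanishing for $|n| \gg 0$, which is exactly the finiteness condition required.

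Representability then produces a unique (up to unique isomorphism) object $\Phi'(B) \in D^b_{\operatorname{coh}}(Y_1)$ equipped with the natural isomorphism
\[
\operatorname{Hom}_{Y_1}(A, \Phi'(B)) \cong \operatorname{Hom}_{Y_2}(\Phi(A), B), \qquad A \in \operatorname{Perf}(Y_1).
\]
Functoriality of $\Phi'$ in $B$ is immediate from uniqueness: a morphism $B \to B'$ induces a natural transformation $H_B \to H_{B'}$, which under the equivalence corresponds to a unique morphism $\Phi'(B) \to \Phi'(B')$. Exactness is similarly formal: given a distinguished triangle $B_1 \to B_2 \to B_3 \to B_1[1]$ in $D^b_{\operatorname{coh}}(Y_2)$, one takes the cone $C$ of $\Phi'(B_1) \to \Phi'(B_2)$; the resulting long exact sequences show that $C$ represents $H_{B_3}$, forcing $C \simeq \Phi'(B_3)$ by the uniqueness clause of representability.

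The main obstacle is locating and invoking the correct form of Neeman's representability theorem in the precise generality needed here, namely for schemes proper over a Noetherian \emph{affine} base (rather than over a field or a projective setting). Once that equivalence is in hand, the construction of $\Phi'$, its functoriality, and its exactness are all formal consequences; no further geometric input is needed beyond the finiteness statement that Hom from a perfect to a bounded coherent complex yields a finitely generated $R$-module, which is standard from properness.
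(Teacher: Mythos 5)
Your proposal is correct and follows essentially the same route as the paper: both rest on Neeman's identification (from approximable triangulated categories, \cite[Example~0.7]{Neeman:2021b}) of $D^b_{\operatorname{coh}}(Y_i)$ with finite cohomological functors $\operatorname{Perf}(Y_i)^{\operatorname{op}}\to\operatorname{Mod}(R)$, and then observe that precomposition with $\Phi^{\operatorname{op}}$ carries finite cohomological functors to finite cohomological functors. The paper leaves the finiteness check and the functoriality/exactness of $\Phi'$ as routine, whereas you spell them out, but the key idea and the citation are the same.
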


\begin{proof}
    It follows, by \cite[Example 0.7]{Neeman:2021b}, that $D^b_{\operatorname{coh}}(Y_i)$ is equivalent to the category of finite cohomological functors $\operatorname{Perf}(Y_i)^{\operatorname{op}} \to \operatorname{Mod}(R)$. Consider a finite cohomological functor $H\colon \operatorname{Perf}(Y_2)^{\operatorname{op}} \to \operatorname{Mod}(R)$. It is easy to see that $H\circ\Phi^{\operatorname{op}} \colon \operatorname{Perf}(Y_1)^{\operatorname{op}}\to \operatorname{Mod}(R)$ is a finite cohomological functor on $\operatorname{Perf}(Y_1)$, which gives us the desired functor $\Phi^\prime\colon D^b_{\operatorname{coh}}(Y_2) \to D^b_{\operatorname{coh}}(Y_1)$. 
\end{proof}

\begin{proposition}
    [cf.\ {\cite[Lemma 3.17]{Ballard:2009}}]
    \label{prop:characterize_perf_for_pseudocoherent_kernel}
    Let $f_1 \colon Y_1 \to S$ and $f_2 \colon Y_2 \to S$ be morphisms of finite type to a Noetherian scheme. Consider the fibered square:
    \begin{displaymath}
        \begin{tikzcd}[ampersand replacement=\&]
            {Y_1\times_S Y_2} \& {Y_2} \\
            {Y_1} \& S
            \arrow["{p_2}", from=1-1, to=1-2]
            \arrow["{p_1}"', from=1-1, to=2-1]
            \arrow["{f_2}", from=1-2, to=2-2]
            \arrow["{f_1}"', from=2-1, to=2-2]
        \end{tikzcd}
    \end{displaymath} 
    Suppose $K$ is an object in $D^-_{\operatorname{coh}}(Y_1 \times_S Y_2)$. Then $\Phi_K (\operatorname{Perf}(Y_1))$ is contained in $\operatorname{Perf}(Y_2)$ if, and only if, $\mathbf{R}p_{2,\ast} (K\otimes^{\mathbf{L}} \operatorname{Perf}(Y_1\times_S Y_2))$ is contained in $\operatorname{Perf}(Y_2)$.
\end{proposition}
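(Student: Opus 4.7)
The plan is to prove the two directions separately. The forward (``if'') direction is immediate: for any $A\in \operatorname{Perf}(Y_1)$, the derived pullback $\mathbf{L}p_1^\ast A$ lies in $\operatorname{Perf}(Y_1\times_S Y_2)$, so
\[
\Phi_K(A) = \mathbf{R}p_{2,\ast}(K\otimes^{\mathbf{L}} \mathbf{L}p_1^\ast A) \in \mathbf{R}p_{2,\ast}(K\otimes^{\mathbf{L}} \operatorname{Perf}(Y_1\times_S Y_2)) \subseteq \operatorname{Perf}(Y_2).
\]

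For the converse, my first step is to reduce to the case $S=\operatorname{Spec}(R)$ affine. Perfectness in $D_{\operatorname{qc}}(Y_2)$ is local on $Y_2$, so it is enough to check the claim after restriction to each open $f_2^{-1}(U_\alpha)\subseteq Y_2$, for $\{U_\alpha\}$ an affine open cover of $S$. Flat base change along $U_\alpha\hookrightarrow S$ identifies the restriction of $\mathbf{R}p_{2,\ast}(K\otimes^{\mathbf{L}}-)$ with the analogous construction over $U_\alpha$ with kernel $K|_{f_1^{-1}(U_\alpha)\times_{U_\alpha} f_2^{-1}(U_\alpha)}$, and the hypothesis $\Phi_K(\operatorname{Perf}(Y_1))\subseteq \operatorname{Perf}(Y_2)$ descends to the restriction because perfects on $f_1^{-1}(U_\alpha)$ are summands of restrictions of perfects on $Y_1$ (via the Verdier localization up to summands recalled in \Cref{sec:prelim_perfectness_pseudocoherence}), and $\operatorname{Perf}(f_2^{-1}(U_\alpha))$ is thick.

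With $S$ affine, both $Y_i$ are qcqs, so by \Cref{ex:perfect_compacts_classical_generator} they admit compact generators $G_i\in \operatorname{Perf}(Y_i)$. A standard external-product argument over an affine base shows that
\[
G := \mathbf{L}p_1^\ast G_1 \otimes^{\mathbf{L}} \mathbf{L}p_2^\ast G_2
\]
is a compact generator of $D_{\operatorname{qc}}(Y_1\times_S Y_2)$; combined with \Cref{rmk:compact_generator_by_generation} this gives $\operatorname{Perf}(Y_1\times_S Y_2) = \langle G\rangle$. Since the functor $F := \mathbf{R}p_{2,\ast}(K\otimes^{\mathbf{L}}-)$ is exact and $\operatorname{Perf}(Y_2)$ is thick in $D_{\operatorname{qc}}(Y_2)$, it suffices to verify that $F(G)\in \operatorname{Perf}(Y_2)$. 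The projection formula yields
\[
F(G)\; \cong\; \mathbf{R}p_{2,\ast}(K\otimes^{\mathbf{L}} \mathbf{L}p_1^\ast G_1)\otimes^{\mathbf{L}} G_2\; =\; \Phi_K(G_1)\otimes^{\mathbf{L}} G_2,
\]
and $\Phi_K(G_1)\in \operatorname{Perf}(Y_2)$ holds by hypothesis; tensoring by the perfect $G_2$ keeps us in $\operatorname{Perf}(Y_2)$.

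The main obstacle I anticipate is the external-generator step after the affine reduction, that is, knowing $G_1\boxtimes G_2$ is a compact generator of $D_{\operatorname{qc}}(Y_1\times_S Y_2)$. This is a standard fact (a variant of \cite[Theorem 3.1.1]{Bondal/VandenBergh:2003}), but it is where all the work lies; once granted, the rest is a routine projection-formula computation together with thickness of $\operatorname{Perf}(Y_2)$, and the bookkeeping of the flat base change reduction to affine $S$.
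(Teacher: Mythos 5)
Your proof is correct and follows essentially the same route as the paper: the trivial (``if'') direction via $\mathbf{L}p_1^\ast\operatorname{Perf}(Y_1)\subseteq\operatorname{Perf}(Y_1\times_S Y_2)$, and for the substantive direction the external-product generator $\mathbf{L}p_1^\ast G_1\otimes^{\mathbf{L}}\mathbf{L}p_2^\ast G_2$ (via \cite[Lemma 3.4.1]{Bondal/VandenBergh:2003} together with \Cref{rmk:compact_generator_by_generation} and \Cref{ex:perfect_compacts_classical_generator}), the projection formula to rewrite $F(G)\cong\Phi_K(G_1)\otimes^{\mathbf{L}}G_2$, and thickness of $\operatorname{Perf}(Y_2)$. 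The only difference is that you insert an explicit reduction to affine $S$ by flat base change and the Thomason--Neeman localization-up-to-summands on $\operatorname{Perf}$, whereas the paper invokes the Bondal--Van den Bergh generation lemma directly over the general Noetherian base $S$; your extra step is harmless and, if anything, makes the appeal to that lemma more self-contained. You correctly identify the external-product generation statement as the key input.
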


\begin{proof}
    Observe the converse direction follows from the fact that $\mathbf{L}p_1^\ast \operatorname{Perf}(Y_1)$ is contained in $\operatorname{Perf}(Y_1\times_S Y_2)$. So we only need to check the forward direction. Assume that $\Phi_K (\operatorname{Perf}(Y_1))$ is contained in $\operatorname{Perf}(Y_2)$. Let $P_i$ be classical generators for $\operatorname{Perf}(Y_i)$. Then, by \cite[Lemma 3.4.1]{Bondal/VandenBergh:2003} coupled with \Cref{rmk:compact_generator_by_generation} and \Cref{ex:perfect_compacts_classical_generator}, one has that $\mathbf{L}p_1^\ast P_1 \otimes^{\mathbf{L}} \mathbf{L}P_2^\ast P_2$ is a classical generator for $\operatorname{Perf}(Y_1 \times_S Y_2)$. Our hypothesis says that $\Phi_K (P_1)$ is an object of $\operatorname{Perf}(Y_2)$. However, by projection formula, we know that 
    \begin{displaymath}
        \Phi_K (P_1) \otimes^{\mathbf{L}} P_2 \cong \mathbf{R}p_{2,\ast} (K \otimes^\mathbf{L} \mathbf{L}p_1^\ast P_1 \otimes^{\mathbf{L}} \mathbf{L}p_2^\ast P_2).
    \end{displaymath}
    Hence, as $(-)\otimes^{\mathbf{L}} P_2$ is an endofunctor on $\operatorname{Perf}(Y_2)$, one has 
    \begin{displaymath}
        \mathbf{R}p_{2,\ast} (K \otimes^\mathbf{L} \mathbf{L}p_1^\ast P_1 \otimes^{\mathbf{L}} \mathbf{L}p_2^\ast P_2)\subseteq \operatorname{Perf}(Y_2). 
    \end{displaymath}
    Then, as $\mathbf{L}p_1^\ast P_1 \otimes^{\mathbf{L}} \mathbf{L}p_2^\ast P_2$ is a classical generator for $\operatorname{Perf}(Y_1\times_S Y_2)$, $\mathbf{R}p_{2,\ast} (K\otimes^{\mathbf{L}} \operatorname{Perf}(Y_1\times_S Y_2))$ is contained in $\operatorname{Perf}(Y_2)$. This completes the proof.
\end{proof}

We record a few cases where \Cref{prop:characterize_perf_for_pseudocoherent_kernel} is applicable.

\begin{example}
    [{\cite[\href{https://stacks.math.columbia.edu/tag/0FYT}{Tag 0FYT}]{StacksProject}}]
    \hfill
    \begin{enumerate}
        \item $K$ can be represented by a bounded complex of coherent $\mathcal{O}_{Y_1\times_S Y_2}$-modules, 
        flat over $Y_2$, with support being proper over $Y_2$;
        \item $f_1$ is proper and flat, and $K$ is in $\operatorname{Perf}(Y_1\times_S Y_2)$;
        \item $f_1$ is proper and flat, $K$ is in $D^b_{\operatorname{coh}}(Y_1\times_S Y_2)$, and for each $s$ in $Y_2$ the derived pullback of $K$ along the projection $Y_1\times_S \operatorname{Spec}(\kappa(s)) \to Y_1 \times_S Y_2$ is locally bounded below (see (\cite[\href{https://stacks.math.columbia.edu/tag/0GEH}{Tag 0GEH}]{StacksProject})).
    \end{enumerate}
\end{example}

\begin{lemma}
    [cf.\ {\cite[Proposition 3.12]{Ballard:2009}}]
    \label{lem:pushforward_preserve_perfect_iff_pullback_coherent_bounded}
    Let $f\colon Y \to X$ be a morphism between schemes which are proper over an affine Noetherian scheme. Then an object $E$ in $D^-_{\operatorname{coh}}(Y)$ satisfies $\mathbf{R}f_\ast (E\otimes^{\mathbf{L}} \operatorname{Perf}(Y))$ being contained in $\operatorname{Perf}(X)$ if, and only if, $E\otimes^{\mathbf{L}} \mathbf{L}f^\ast D^b_{\operatorname{coh}}(X)$ is contained in $D^b_{\operatorname{coh}}(Y)$.
\end{lemma}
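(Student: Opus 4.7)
The plan is to prove the two implications separately. Note first that the structure maps $Y\to \operatorname{Spec}(R)$ being proper and $X\to \operatorname{Spec}(R)$ being separated force $f\colon Y\to X$ to be proper; in particular, $\mathbf{R}f_\ast$ preserves bounded coherent cohomology. The harder direction is $(\Rightarrow)$, where I would leverage the finite-cohomological-functor description of $D^b_{\operatorname{coh}}$ behind \Cref{lem:approx_by_proper_lift_functors}.

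For $(\Leftarrow)$, I would use the criterion that an object $Q\in D^b_{\operatorname{coh}}(X)$ is perfect precisely when $Q\otimes^{\mathbf{L}} A \in D^b_{\operatorname{coh}}(X)$ for every $A\in D^b_{\operatorname{coh}}(X)$ (equivalently, $Q$ has finite Tor-amplitude, checkable at residue fields of closed points with $A = \iota_{x,\ast}\kappa(x)$). Given $P\in \operatorname{Perf}(Y)$, the complex $E\otimes^{\mathbf{L}} P$ is in $D^b_{\operatorname{coh}}(Y)$ by finite Tor-amplitude of $P$, and hence $\mathbf{R}f_\ast(E\otimes^{\mathbf{L}} P)\in D^b_{\operatorname{coh}}(X)$. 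For any $A\in D^b_{\operatorname{coh}}(X)$, the projection formula gives
\[
    \mathbf{R}f_\ast(E\otimes^{\mathbf{L}} P)\otimes^{\mathbf{L}} A \;\cong\; \mathbf{R}f_\ast\!\bigl((E\otimes^{\mathbf{L}} \mathbf{L}f^\ast A)\otimes^{\mathbf{L}} P\bigr),
\]
and the right-hand side lies in $D^b_{\operatorname{coh}}(X)$ since by hypothesis $E\otimes^{\mathbf{L}} \mathbf{L}f^\ast A\in D^b_{\operatorname{coh}}(Y)$, tensoring with the perfect complex $P$ preserves this, and proper pushforward keeps us in $D^b_{\operatorname{coh}}$. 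Hence $\mathbf{R}f_\ast(E\otimes^{\mathbf{L}} P)\in \operatorname{Perf}(X)$.

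For $(\Rightarrow)$, fix $A\in D^b_{\operatorname{coh}}(X)$ and set $F := E\otimes^{\mathbf{L}} \mathbf{L}f^\ast A$. Since $E\in D^-_{\operatorname{coh}}(Y)$ and $\mathbf{L}f^\ast$ preserves bounded-above coherent cohomology, $F\in D^-_{\operatorname{coh}}(Y)$ automatically, so only boundedness below must be established. By the equivalence between $D^b_{\operatorname{coh}}(Y)$ and the category of finite cohomological functors $\operatorname{Perf}(Y)^{\operatorname{op}}\to \operatorname{Mod}(R)$ used in the proof of \Cref{lem:approx_by_proper_lift_functors}, it suffices to show that, for every $P\in \operatorname{Perf}(Y)$, the graded $R$-module $\bigoplus_n \operatorname{Hom}_Y(P, F[n])$ is finitely generated. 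Using the dual $P^\vee \in \operatorname{Perf}(Y)$ together with the projection formula I would compute
\[
    \operatorname{Hom}_Y(P, F[n]) \;\cong\; H^n\mathbf{R}\Gamma\!\bigl(X,\, \mathbf{R}f_\ast(P^\vee \otimes^{\mathbf{L}} E)\otimes^{\mathbf{L}} A\bigr) \;\cong\; \operatorname{Hom}_X(Q^\vee,\, A[n]),
\]
where $Q := \mathbf{R}f_\ast(P^\vee\otimes^{\mathbf{L}} E)$ is perfect on $X$ by hypothesis. Applying the same finite-cohomological-functor criterion on $X$ (proper over $\operatorname{Spec}(R)$) to the pair $(Q^\vee, A)\in \operatorname{Perf}(X)\times D^b_{\operatorname{coh}}(X)$ yields that $\bigoplus_n \operatorname{Hom}_X(Q^\vee, A[n])$ is finitely generated over $R$, so the restricted Yoneda of $F$ is indeed finite cohomological, forcing $F\in D^b_{\operatorname{coh}}(Y)$.

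The main obstacle is the final step of $(\Rightarrow)$: promoting the finite cohomological restricted Yoneda of $F$ to actual membership in $D^b_{\operatorname{coh}}(Y)$. \Cref{lem:approx_by_proper_lift_functors} gives an equivalence between $D^b_{\operatorname{coh}}(Y)$ and finite cohomological functors, but $F$ is a priori only in $D^-_{\operatorname{coh}}(Y)\subseteq D_{\operatorname{qc}}(Y)$; one must justify that an object of $D_{\operatorname{qc}}(Y)$ with finite cohomological restricted Yoneda is isomorphic to the object of $D^b_{\operatorname{coh}}(Y)$ representing the same functor. This follows from compact generation of $D_{\operatorname{qc}}(Y)$ by $\operatorname{Perf}(Y)$, under which an object is determined up to isomorphism by its restricted Yoneda on compacts. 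As a fallback, boundedness below of $F$ could be checked affine-locally from the finiteness of the displayed $\operatorname{Ext}$-groups, but the approximable-category route is cleaner and matches the other arguments in the paper.
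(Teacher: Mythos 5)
Your $(\Leftarrow)$ direction is essentially the same as the paper's: you tensor $\mathbf{R}f_\ast(E\otimes^{\mathbf L}P)$ (already known to be in $D^b_{\operatorname{coh}}(X)$ since $f$ is proper) against arbitrary $A\in D^b_{\operatorname{coh}}(X)$, use the projection formula to stay bounded-coherent, and invoke the finite Tor-amplitude criterion for perfection; the paper cites exactly this as \cite[Theorem 2.3.(3)]{AlonsoTarrio/JeremiasLopez/SanchodeSalas:2023}.

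Your $(\Rightarrow)$ direction genuinely diverges from the paper, and the divergence is interesting. The paper routes through Grothendieck duality: from the hypothesis it deduces (via \cite[Lemma 3.7]{Ballard:2009}) that $\mathbf{R}f_\ast(\mathbf{R}\operatorname{\mathcal{H}\!\mathit{om}}(E,f^!\mathcal O_X)\otimes^{\mathbf L}(-))$ also preserves perfects, identifies this functor as a left adjoint of $E\otimes^{\mathbf L}\mathbf{L}f^\ast(-)$ on $D_{\operatorname{qc}}$ (\cite[Lemma 3.10]{Ballard:2009}), and then applies \Cref{lem:approx_by_proper_lift_functors} verbatim. You instead compute the relevant $\operatorname{Ext}$-groups directly, transferring $\operatorname{Hom}_Y(P,F[n])$ through the projection formula to $\operatorname{Hom}_X(Q^\vee,A[n])$ with $Q=\mathbf{R}f_\ast(P^\vee\otimes^{\mathbf L}E)$ perfect by hypothesis and $A$ bounded. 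This bypasses $f^!$ and the Ballard lemmas entirely, and in fact it closely parallels the paper's own proof of the subsequent \Cref{lem:integral_transform_bounded_coherent_iff_perf_projection}, so it is a legitimate alternative route that fits the paper's toolkit.

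However, your primary way of closing the $(\Rightarrow)$ argument has a real gap. You assert that compact generation of $D_{\operatorname{qc}}(Y)$ by $\operatorname{Perf}(Y)$ means an object is determined up to isomorphism by its restricted Yoneda on compacts. That is false in general: the restricted Yoneda functor on a compactly generated triangulated category is neither full nor faithful because of phantom maps, and an abstract natural isomorphism of functors $\operatorname{Hom}(-,F)\cong\operatorname{Hom}(-,F')$ on compacts need not lift to a map $F\to F'$ at all. Neeman's \cite[Example 0.7]{Neeman:2021c} gives an equivalence between $D^b_{\operatorname{coh}}(Y)$ and finite cohomological functors, but a priori you only know $F\in D^-_{\operatorname{coh}}(Y)$, and that equivalence does not by itself tell you $F$ agrees with the $D^b_{\operatorname{coh}}$-representative of its restricted Yoneda. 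What you call the "fallback" is in fact the correct argument and needs no affine-localization: take $P$ a compact generator of $D_{\operatorname{qc}}(Y)$; your computation gives $\operatorname{Hom}_Y(P,F[n])\cong\operatorname{Hom}_X(Q^\vee,A[n])$, which vanishes for $n\ll 0$ since $Q^\vee$ is perfect and $A$ is bounded, and then \cite[\href{https://stacks.math.columbia.edu/tag/0GEQ}{Tag 0GEQ}]{StacksProject} (the criterion the paper itself uses twice in the proof of \Cref{lem:integral_transform_bounded_coherent_iff_perf_projection}) forces $F\in D^+_{\operatorname{qc}}(Y)$, hence $F\in D^b_{\operatorname{coh}}(Y)$. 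With that repair your alternative proof of $(\Rightarrow)$ is sound.
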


\begin{proof}
    First, we show the forward direction. Let $E$ be an object of $D^-_{\operatorname{coh}}(Y)$. Assume $\mathbf{R}f_\ast (E\otimes^{\mathbf{L}} \operatorname{Perf}(Y))$ is contained in $\operatorname{Perf}(X)$. Then, by \cite[Lemma 3.7]{Ballard:2009}, we have $\mathbf{R}f_\ast (\mathbf{R}\operatorname{\mathcal{H}\! \mathit{om}}(E,f^! \mathcal{O}_X)\otimes^{\mathbf{L}} P)$ is an object of $\operatorname{Perf}(X)$ for each $P$ in $\operatorname{Perf}(Y)$. Observe that \cite[Lemma 3.10]{Ballard:2009} tells us $\mathbf{R}f_\ast (\mathbf{R}\operatorname{\mathcal{H}\! \mathit{om}}(E,f^! \mathcal{O}_X)\otimes^{\mathbf{L}} (-))$ is left adjoint to $E \otimes^\mathbf{L} \mathbf{L}f^\ast (-)$ as functors between $D_{\operatorname{qc}}(X)$ and $D_{\operatorname{qc}}(Y)$. Then, by \Cref{lem:approx_by_proper_lift_functors}, one has $E \otimes^\mathbf{L} \mathbf{L}f^\ast (-) \colon D^b_{\operatorname{coh}}(X) \to D_{\operatorname{qc}}(Y)$ must factor through $D^b_{\operatorname{coh}}(Y)$ as desired.

    Next, we check the converse direction. Suppose $E\otimes^{\mathbf{L}} \mathbf{L}f^\ast D^b_{\operatorname{coh}}(X)$ is contained in $D^b_{\operatorname{coh}}(Y)$. Note that $f$ is proper as a morphism between proper schemes over a scheme must itself be proper. This ensures that $\mathbf{R}f_\ast (E\otimes^{\mathbf{L}} P )$ belongs to $D^b_{\operatorname{coh}}(X)$. Now, observe that for any $P$ in $\operatorname{Perf}(Y)$, one has that $E\otimes^{\mathbf{L}} P \otimes^{\mathbf{L}} \mathbf{L}f^\ast D^b_{\operatorname{coh}}(X)$ is also contained in $D^b_{\operatorname{coh}}(Y)$.
    Then, by the projection formula, we see that $\mathbf{R}f_\ast (E\otimes^{\mathbf{L}} P ) \otimes^{\mathbf{L}} D^b_{\operatorname{coh}}(X)$ is also contained in $D^b_{\operatorname{coh}}(X)$ for all $P$ in $\operatorname{Perf}(Y)$. As we already know that $\mathbf{R}f_\ast (E\otimes^{\mathbf{L}} P )$ belongs to $D^b_{\operatorname{coh}}(X)$, it follows from \cite[Theorem 2.3.(3)]{AlonsoTarrio/JeremiasLopez/SanchodeSalas:2023} that $\mathbf{R}f_\ast (E\otimes^{\mathbf{L}} P )$ is contained in $\operatorname{Perf}(X)$ as desired, which completes the proof.
\end{proof}

We now determine when an integral transform preserves objects with bounded and coherent cohomology if the base scheme is affine.

\begin{lemma}\label{lem:integral_transform_bounded_coherent_iff_perf_projection}
    Let $f_1 \colon Y_1 \to S$ and $f_2 \colon Y_2 \to S$ be proper morphisms to an affine Noetherian scheme. Consider the fibered square:
    \begin{displaymath}
        \begin{tikzcd}[ampersand replacement=\&]
            {Y_1\times_S Y_2} \& {Y_2} \\
            {Y_1} \& S.
            \arrow["{p_2}", from=1-1, to=1-2]
            \arrow["{p_1}"', from=1-1, to=2-1]
            \arrow["{f_2}", from=1-2, to=2-2]
            \arrow["{f_1}"', from=2-1, to=2-2]
        \end{tikzcd}
    \end{displaymath}
    Then the following are equivalent for any object $E$ in $D^-_{\operatorname{coh}}(Y_1 \times_S Y_2)$:
    \begin{enumerate}
        \item \label{eq:integral_transform_bounded_coherent_iff_perf_projection1} $\Phi_E (D^b_{\operatorname{coh}}(Y_1))\subseteq D^b_{\operatorname{coh}}(Y_2)$ 
        \item \label{eq:integral_transform_bounded_coherent_iff_perf_projection2} $\mathbf{R}p_{1,\ast} (E\otimes^{\mathbf{L}} \operatorname{Perf}(Y_1\times_S Y_2)) \subseteq \operatorname{Perf}(Y_1)$.
    \end{enumerate}
\end{lemma}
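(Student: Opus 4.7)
The approach is to recast condition \eqref{eq:integral_transform_bounded_coherent_iff_perf_projection2} using Lemma~\ref{lem:pushforward_preserve_perfect_iff_pullback_coherent_bounded}. The projection $p_1 \colon Y_1 \times_S Y_2 \to Y_1$ is a base change of $f_2$ and hence proper, and both $Y_1$ and $Y_1 \times_S Y_2$ are proper over the affine base $S$. Applying Lemma~\ref{lem:pushforward_preserve_perfect_iff_pullback_coherent_bounded} to $p_1$ with the kernel $E$ thus shows that \eqref{eq:integral_transform_bounded_coherent_iff_perf_projection2} is equivalent to requiring $E \otimes^{\mathbf{L}} \mathbf{L}p_1^\ast A \in D^b_{\operatorname{coh}}(Y_1 \times_S Y_2)$ for every $A \in D^b_{\operatorname{coh}}(Y_1)$. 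In that formulation the implication \eqref{eq:integral_transform_bounded_coherent_iff_perf_projection2} $\Rightarrow$ \eqref{eq:integral_transform_bounded_coherent_iff_perf_projection1} is immediate: applying $\mathbf{R}p_{2,\ast}$, which preserves bounded coherent cohomology by properness of $p_2$, yields $\Phi_E(A) \in D^b_{\operatorname{coh}}(Y_2)$.

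The heart of the proof is the converse \eqref{eq:integral_transform_bounded_coherent_iff_perf_projection1} $\Rightarrow$ \eqref{eq:integral_transform_bounded_coherent_iff_perf_projection2}. Fix $A \in D^b_{\operatorname{coh}}(Y_1)$ and set $F \coloneqq E \otimes^{\mathbf{L}} \mathbf{L}p_1^\ast A$. Pseudocoherence is stable under derived pullback and derived tensor product, so $F$ lies automatically in $D^-_{\operatorname{coh}}(Y_1 \times_S Y_2)$, and the task reduces to showing that $F$ is bounded below. The plan is to test $F$ against compact objects: for any classical generator $G = \mathbf{L}p_1^\ast P_1 \otimes^{\mathbf{L}} \mathbf{L}p_2^\ast P_2$ of $\operatorname{Perf}(Y_1 \times_S Y_2)$ --- such objects generate by the external-tensor argument used in the proof of Proposition~\ref{prop:characterize_perf_for_pseudocoherent_kernel} --- the projection formula, the dualizability of perfect complexes, and the adjunction $(\mathbf{L}p_2^\ast, \mathbf{R}p_{2,\ast})$ combine to give the natural isomorphism
\begin{displaymath}
    \operatorname{Hom}_{Y_1 \times_S Y_2}(G, F[n]) \cong \operatorname{Hom}_{Y_2}(P_2, \Phi_E(P_1^\vee \otimes^{\mathbf{L}} A)[n]).
\end{displaymath}
Since $P_1^\vee \otimes^{\mathbf{L}} A$ lies in $D^b_{\operatorname{coh}}(Y_1)$, hypothesis \eqref{eq:integral_transform_bounded_coherent_iff_perf_projection1} places $\Phi_E(P_1^\vee \otimes^{\mathbf{L}} A)$ in $D^b_{\operatorname{coh}}(Y_2)$, and a standard pushforward-to-$S$ computation then shows that $\bigoplus_n \operatorname{Hom}(G, F[n])$ is a finitely generated $R$-module. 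Hence $\operatorname{Hom}(-, F)|_{\operatorname{Perf}(Y_1 \times_S Y_2)^{\operatorname{op}}}$ is a finite $R$-linear cohomological functor in the sense of Neeman.

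To conclude, I would invoke the description of $D^b_{\operatorname{coh}}(Y_1 \times_S Y_2)$ as the category of such finite cohomological functors --- the very equivalence underlying Lemma~\ref{lem:approx_by_proper_lift_functors} and coming from \cite[Example 0.7]{Neeman:2021b} --- to produce an object $F' \in D^b_{\operatorname{coh}}(Y_1 \times_S Y_2)$ representing $\operatorname{Hom}(-, F)$. A Yoneda-style identification, using that $\operatorname{Perf}(Y_1 \times_S Y_2)$ compactly generates $D_{\operatorname{qc}}(Y_1 \times_S Y_2)$, would then force $F \cong F'$, placing $F$ in $D^b_{\operatorname{coh}}$. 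I expect this last identification to be the main obstacle: one must carefully reconcile the object furnished by Neeman's equivalence with our candidate $F$, which a priori only lives in the larger category $D^-_{\operatorname{coh}}(Y_1 \times_S Y_2)$, and doing so requires a bit more than the black-box statement of Lemma~\ref{lem:approx_by_proper_lift_functors}.
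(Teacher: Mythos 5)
Your direction \eqref{eq:integral_transform_bounded_coherent_iff_perf_projection2} $\Rightarrow$ \eqref{eq:integral_transform_bounded_coherent_iff_perf_projection1} is fine and matches the paper. Your direction \eqref{eq:integral_transform_bounded_coherent_iff_perf_projection1} $\Rightarrow$ \eqref{eq:integral_transform_bounded_coherent_iff_perf_projection2} starts exactly the way the paper does: you reduce via Lemma~\ref{lem:pushforward_preserve_perfect_iff_pullback_coherent_bounded} to showing $F := E \otimes^{\mathbf{L}} \mathbf{L}p_1^\ast A$ has bounded cohomology, note $F \in D^-_{\operatorname{coh}}$ is automatic, and compute the key isomorphism $\operatorname{Hom}(G, F[n]) \cong \operatorname{Hom}(P_2, \Phi_E(P_1^\vee \otimes^{\mathbf{L}} A)[n])$ for $G = \mathbf{L}p_1^\ast P_1 \otimes^{\mathbf{L}} \mathbf{L}p_2^\ast P_2$ a compact generator. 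That is the heart of the paper's argument.

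But you then take an unnecessary detour that leaves a genuine, and indeed self-acknowledged, gap. You try to show $\bigoplus_n \operatorname{Hom}(G,F[n])$ is a finite $R$-module, invoke Neeman's finite-cohomological-functor description to produce a representing $F' \in D^b_{\operatorname{coh}}$, and then hope to identify $F \cong F'$. The problem is that the restricted Yoneda functor $D_{\operatorname{qc}} \to \operatorname{Fun}(\operatorname{Perf}^{\operatorname{op}}, \operatorname{Mod}(R))$ is neither full nor faithful; a natural isomorphism of functors $\operatorname{Hom}(-,F)|_{\operatorname{Perf}} \cong \operatorname{Hom}(-,F')|_{\operatorname{Perf}}$ does not by itself lift to a morphism $F \to F'$ in $D_{\operatorname{qc}}$, so conservativity of restricted Yoneda cannot be applied. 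Making this step rigorous requires more than the black-box statement of Lemma~\ref{lem:approx_by_proper_lift_functors}, exactly as you suspect.

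None of that machinery is needed. Once you have the displayed isomorphism, you are done by a much shorter route: the hypothesis places $\Phi_E(P_1^\vee \otimes^{\mathbf{L}} A) \in D^b_{\operatorname{coh}}(Y_2)$, so since $P_2$ is perfect the right-hand side vanishes for $n \ll 0$; hence $\operatorname{Hom}(G, F[n]) = 0$ for $n \ll 0$ with $G$ a compact generator, and by \cite[\href{https://stacks.math.columbia.edu/tag/0GEQ}{Tag 0GEQ}]{StacksProject} this forces $F \in D^+_{\operatorname{qc}}(Y_1 \times_S Y_2)$. Combined with $F \in D^-_{\operatorname{coh}}$ you get $F \in D^b_{\operatorname{coh}}$ directly, with no appeal to representability of cohomological functors. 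This is precisely how the paper closes the argument.
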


\begin{proof}
    First, we check $\eqref{eq:integral_transform_bounded_coherent_iff_perf_projection1}\implies \eqref{eq:integral_transform_bounded_coherent_iff_perf_projection2}$. Assume $\Phi_E (D^b_{\operatorname{coh}}(Y_1))$ is contained in $D^b_{\operatorname{coh}}(Y_2)$. It suffices, by \Cref{lem:pushforward_preserve_perfect_iff_pullback_coherent_bounded}, to show that $E\otimes^{\mathbf{L}} \mathbf{L}p_1^\ast D^b_{\operatorname{coh}}(Y_1)$ is contained in $D^b_{\operatorname{coh}}(Y_1\times_S Y_2)$.
    
    Note, by \cite[\href{https://stacks.math.columbia.edu/tag/09U7}{Tag 09U7}]{StacksProject}, that $\mathbf{L}p_1^\ast D^b_{\operatorname{coh}}(Y_1)$ is contained in $D^-_{\operatorname{coh}}(Y_1\times_S Y_2)$. Hence, from \cite[\href{https://stacks.math.columbia.edu/tag/09J3}{Tag 09J3}]{StacksProject}, one has $E\otimes^{\mathbf{L}} D^b_{\operatorname{coh}}(Y_1)$ is contained in $D^-_{\operatorname{coh}}(Y_1\times_S Y_2)$. It suffices to check that $E\otimes^\mathbf{L} \mathbf{L}p_1^\ast G$ has bounded cohomology for each $G$ in $D^b_{\operatorname{coh}}(Y_1)$. This can be done by showing each such object belongs to $D^+_{\operatorname{qc}}(Y_1\times_S Y_2)$. 
    
    Let $P_i$ be a compact generator of $D_{\operatorname{qc}}(Y_i)$ for each $i$. Then, by \cite[Lemma 3.4.1]{Bondal/VandenBergh:2003}, one has $\mathbf{L}p_1^\ast P_1 \otimes^{\mathbf{L}} \mathbf{L}p_2^\ast P_2$ is a compact generator for $D_{\operatorname{qc}}(Y_1 \times_S Y_2)$. There is a string of isomorphisms:
    \begin{displaymath}
        \begin{aligned}
            &\operatorname{Ext}^n  (\mathbf{L}p_1^\ast P_1 \otimes^{\mathbf{L}} \mathbf{L}p_2^\ast P_2, E\otimes^\mathbf{L} \mathbf{L}p_1^\ast G)
            \\&\cong \operatorname{Ext}^n (\mathbf{L}p_2^\ast P_2, \mathbf{R}\operatorname{\mathcal{H}\! \mathit{om}}(\mathbf{L}p_1^\ast P_1, E\otimes^\mathbf{L} \mathbf{L}p_1^\ast G))  && \textrm{(\cite[\href{https://stacks.math.columbia.edu/tag/08DH}{Tag 08DH}]{StacksProject})}
            \\&\cong \operatorname{Ext}^n (\mathbf{L}p_2^\ast P_2, \mathbf{R}\operatorname{\mathcal{H}\! \mathit{om}}(\mathbf{L}p_1^\ast P_1,\mathcal{O}_{Y_1\times_S Y_2}) \otimes^{\mathbf{L}} E\otimes^\mathbf{L} \mathbf{L}p_1^\ast G) && \textrm{(\cite[\href{https://stacks.math.columbia.edu/tag/08DQ}{Tag 08DQ}]{StacksProject})}
            \\&\cong \operatorname{Ext}^n (\mathbf{L}p_2^\ast P_2, \mathbf{R}\operatorname{\mathcal{H}\! \mathit{om}}(\mathbf{L}p_1^\ast P_1,\mathbf{L}p_1^\ast \mathcal{O}_{Y_1}) \otimes^{\mathbf{L}} E\otimes^\mathbf{L} \mathbf{L}p_1^\ast G ) && (\mathbf{L}p_1^\ast \mathcal{O}_{Y_1} =\mathcal{O}_{Y_1\times_S Y_2})
            \\&\cong \operatorname{Ext}^n (\mathbf{L}p_2^\ast P_2, \mathbf{L}p_1^\ast (\mathbf{R}\operatorname{\mathcal{H}\! \mathit{om}}( P_1,\mathcal{O}_{Y_1})) \otimes^{\mathbf{L}} E\otimes^\mathbf{L} \mathbf{L}p_1^\ast G ) && \textrm{(\cite[Prop. 22.70]{Gortz/Wedhorn:2023})}
            \\&\cong \operatorname{Ext}^n (\mathbf{L}p_2^\ast P_2, \mathbf{L}p_1^\ast (\mathbf{R}\operatorname{\mathcal{H}\! \mathit{om}}( P_1,\mathcal{O}_{Y_1})\otimes G) \otimes^{\mathbf{L}} E ) && \textrm{(\cite[\href{https://stacks.math.columbia.edu/tag/07A4}{Tag 07A4}]{StacksProject})}
            \\&\cong \operatorname{Ext}^n (P_2, \mathbf{R}p_{2,\ast} (\mathbf{L}p_1^\ast (\mathbf{R}\operatorname{\mathcal{H}\! \mathit{om}}( P_1,\mathcal{O}_{Y_1})\otimes G) \otimes^{\mathbf{L}} E) ) && \textrm{(Adjunction)}
            \\&\cong \operatorname{Ext}^n (P_2, \Phi_E (\mathbf{R}\operatorname{\mathcal{H}\! \mathit{om}}( P_1,\mathcal{O}_{Y_1})\otimes G)).
        \end{aligned}
    \end{displaymath}
    It follows, as $\mathbf{R}\operatorname{\mathcal{H}\! \mathit{om}}( P_1,\mathcal{O}_{Y_1})$ is perfect on $Y_1$, that $\mathbf{R}\operatorname{\mathcal{H}\! \mathit{om}}( P_1,\mathcal{O}_{Y_1})\otimes G$ belongs to $D^b_{\operatorname{coh}}(Y_1)$. Our hypothesis tells us $\Phi_E (\mathbf{R}\operatorname{\mathcal{H}\! \mathit{om}}( P_1,\mathcal{O}_{Y_1})\otimes G)$ belongs to $D^b_{\operatorname{coh}}(Y_2)$. Hence, from \cite[\href{https://stacks.math.columbia.edu/tag/0GEQ}{Tag 0GEQ}]{StacksProject}, one has $\operatorname{Ext}^n (P_2, \Phi_E (\mathbf{R}\operatorname{\mathcal{H}\! \mathit{om}}( P_1,\mathcal{O}_{Y_1})\otimes G)) = 0$ for $0\gg n$. Then, once more from \cite[\href{https://stacks.math.columbia.edu/tag/0GEQ}{Tag 0GEQ}]{StacksProject}, we see that $E\otimes^\mathbf{L} \mathbf{L}p_1^\ast G$ is an object of $D^+_{\operatorname{qc}} (Y_1\times_S Y_2)$ as desired.
    
    Next, we show $\eqref{eq:integral_transform_bounded_coherent_iff_perf_projection2}\implies \eqref{eq:integral_transform_bounded_coherent_iff_perf_projection1}$. It follows, by \Cref{lem:pushforward_preserve_perfect_iff_pullback_coherent_bounded} coupled with the hypothesis, that $E\otimes^{\mathbf{L}} \mathbf{L}p_1^\ast D^b_{\operatorname{coh}}(Y_1)$ is contained in $D^b_{\operatorname{coh}}(Y_1 \times_S Y_2)$. Then, as $p_2$ is proper, we have $\mathbf{R}p_{2,\ast} D^b_{\operatorname{coh}}(Y_1\times_S Y_2)$ is contained in $D^b_{\operatorname{coh}}(Y_2)$. This completes the proof.
\end{proof}

Next, we upgrade ourselves to when the base scheme is not affine.

\begin{proposition}\label{prop:preserves_bounded_coherence_non_affine_base_scheme}
    Let $f_1 \colon Y_1 \to S$ and $f_2 \colon Y_2 \to S$ be proper morphisms to a Noetherian scheme. Consider the fibered square:
    \begin{displaymath}
        \begin{tikzcd}[ampersand replacement=\&]
            {Y_1\times_S Y_2} \& {Y_2} \\
            {Y_1} \& S.
            \arrow["{p_2}", from=1-1, to=1-2]
            \arrow["{p_1}"', from=1-1, to=2-1]
            \arrow["{f_2}", from=1-2, to=2-2]
            \arrow["{f_1}"', from=2-1, to=2-2]
        \end{tikzcd}
    \end{displaymath}
    Then the following are equivalent for any object $K$ in $D^-_{\operatorname{coh}}(Y_1 \times_S Y_2)$:
    \begin{enumerate}
        \item \label{eq:preserves_bounded_coherence_non_affine_base_scheme1} $\Phi_K (D^b_{\operatorname{coh}}(Y_1))\subseteq D^b_{\operatorname{coh}}(Y_2)$,
        \item \label{eq:preserves_bounded_coherence_non_affine_base_scheme2} $\mathbf{R} p_{1,\ast} (K\otimes^{\mathbf{L}} \operatorname{Perf}(Y_1\times_S Y_2))\subseteq \operatorname{Perf}(Y_1)$.
    \end{enumerate}
\end{proposition}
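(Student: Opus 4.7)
The strategy is to reduce to the affine-base case already established in Lemma~\ref{lem:integral_transform_bounded_coherent_iff_perf_projection} by localizing on $S$. Choose an affine open cover $\{U_\alpha\}$ of the Noetherian scheme $S$ and set $V_{i,\alpha} := f_i^{-1}(U_\alpha)$ and $W_\alpha := V_{1,\alpha}\times_{U_\alpha}V_{2,\alpha}$, which agrees with the preimage of $U_\alpha$ under $f_i\circ p_i$. Each base-changed square consists of proper morphisms over the affine Noetherian scheme $U_\alpha$. Writing $j_1,j_2,j_W$ for the three open immersions, flat base change (applicable since open immersions are flat and $p_1,p_2$ are proper, being pulled back from $f_1,f_2$) together with the symmetric monoidality of derived pullback yield natural isomorphisms
\[
\Phi_K(F)\big|_{V_{2,\alpha}} \;\cong\; \Phi_{K|_{W_\alpha}}\bigl(F|_{V_{1,\alpha}}\bigr), \qquad \mathbf{R}p_{1,\ast}\bigl(K\otimes^{\mathbf{L}} Q\bigr)\big|_{V_{1,\alpha}} \;\cong\; \mathbf{R}p_{1,\alpha,\ast}\bigl(K|_{W_\alpha}\otimes^{\mathbf{L}} Q|_{W_\alpha}\bigr)
\]
for any $F\in D^b_{\operatorname{coh}}(Y_1)$ and $Q\in\operatorname{Perf}(Y_1\times_S Y_2)$.

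Using these compatibilities, the next step is to check that both \eqref{eq:preserves_bounded_coherence_non_affine_base_scheme1} and \eqref{eq:preserves_bounded_coherence_non_affine_base_scheme2} are local conditions on $S$. Since membership in $D^b_{\operatorname{coh}}$ and in $\operatorname{Perf}$ can be tested after restriction to any affine open cover, the ``only if'' directions are immediate from the base-change formulas. For the ``if'' direction of \eqref{eq:preserves_bounded_coherence_non_affine_base_scheme1}, every $G\in D^b_{\operatorname{coh}}(V_{1,\alpha})$ lifts to some $\widetilde{G}\in D^b_{\operatorname{coh}}(Y_1)$ because restriction along the open immersion $j_1$ is a Verdier localization by \cite[Theorem 4.4]{Elagin/Lunts/Schnurer:2020}, and then $\Phi_{K|_{W_\alpha}}(G)\cong \Phi_K(\widetilde{G})|_{V_{2,\alpha}}$ inherits bounded coherent cohomology from $\Phi_K(\widetilde{G})$. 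For the ``if'' direction of \eqref{eq:preserves_bounded_coherence_non_affine_base_scheme2}, the Thomason--Neeman localization theorem for perfect complexes exhibits every $Q\in\operatorname{Perf}(W_\alpha)$ as a direct summand of some $\widetilde{Q}|_{W_\alpha}$ with $\widetilde{Q}\in\operatorname{Perf}(Y_1\times_S Y_2)$, so $\mathbf{R}p_{1,\alpha,\ast}(K|_{W_\alpha}\otimes^{\mathbf{L}}Q)$ is a direct summand of a perfect complex, hence perfect.

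Finally, apply Lemma~\ref{lem:integral_transform_bounded_coherent_iff_perf_projection} to the base-changed kernel $K|_{W_\alpha}\in D^-_{\operatorname{coh}}(W_\alpha)$ on each affine piece, giving the equivalence of the local versions of \eqref{eq:preserves_bounded_coherence_non_affine_base_scheme1} and \eqref{eq:preserves_bounded_coherence_non_affine_base_scheme2} over $U_\alpha$. Chaining the resulting equivalences ``global \eqref{eq:preserves_bounded_coherence_non_affine_base_scheme1} $\Leftrightarrow$ local \eqref{eq:preserves_bounded_coherence_non_affine_base_scheme1} $\Leftrightarrow$ local \eqref{eq:preserves_bounded_coherence_non_affine_base_scheme2} $\Leftrightarrow$ global \eqref{eq:preserves_bounded_coherence_non_affine_base_scheme2}'' completes the proof. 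I expect the main obstacle to be the bookkeeping around the lifting statements: one must verify that the Verdier/summand-completion lifts from $V_{i,\alpha}$ to $Y_i$ interact correctly with the base-change formula applied to a kernel $K$ that is only bounded-above coherent (not perfect), which is exactly where properness of $p_1,p_2$ and flatness of the open immersions are needed to invoke flat base change at the required level of generality.
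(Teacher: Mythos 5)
Your proposal is correct and takes essentially the same approach as the paper: localize over an affine cover of $S$, use flat base change to identify $\Phi_K$ and $\mathbf{R}p_{1,\ast}(K\otimes^{\mathbf{L}}-)$ with their base-changed versions, invoke the Verdier localization of \cite[Theorem~4.4]{Elagin/Lunts/Schnurer:2020} for $D^b_{\operatorname{coh}}$ and Thomason--Neeman for $\operatorname{Perf}$ to pass between local and global conditions, and reduce to \Cref{lem:integral_transform_bounded_coherent_iff_perf_projection}. The paper writes out only the implication $(1)\Rightarrow(2)$ and declares the converse ``similar''; your sketch makes the symmetric chain global~$(1)\Leftrightarrow$~local~$(1)\Leftrightarrow$~local~$(2)\Leftrightarrow$~global~$(2)$ and the role of each lifting lemma slightly more explicit, but the substance is identical.
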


\begin{proof}
    We only check $\eqref{eq:preserves_bounded_coherence_non_affine_base_scheme1} \implies \eqref{eq:preserves_bounded_coherence_non_affine_base_scheme2}$ as the $\eqref{eq:preserves_bounded_coherence_non_affine_base_scheme2} \implies \eqref{eq:preserves_bounded_coherence_non_affine_base_scheme1}$ direction can be argued in a similar fashion. Consider an affine open cover $U_1,\ldots,U_n$ of $S$. This gives us an open cover $Y^\prime_{ij}$ for each $Y_j$. Denote by $s_i \colon U_i \to S$ the associated open immersion of each $U_i$ in $S$. 
     
    There is, for each $i$, a commutative cube:
    \begin{displaymath}
        \begin{tikzcd}[ampersand replacement=\&]
            {Y^\prime_{i1}\times_S Y^\prime_{i2}} \&\& {Y_{i2}^\prime} \\
            \& {Y^\prime_{i1}} \&\&\& U_i \\
            {Y_1 \times_S Y_2} \&\& {Y_2} \\
            \& {Y_1} \&\&\& S.
            \arrow["{q_{i2}}", from=1-1, to=1-3]
            \arrow["{q_{i1}}"', from=1-1, to=2-2]
            \arrow["t_i"', from=1-1, to=3-1]
            \arrow["{g_{i2}}", from=1-3, to=2-5]
            \arrow["{s_{i2}}"'{pos=0.3}, dashed, from=1-3, to=3-3]
            \arrow["{g_{i1}}"', from=2-2, to=2-5]
            \arrow["{s_{i1}}"{pos=0.7}, from=2-2, to=4-2]
            \arrow["s_i"', from=2-5, to=4-5]
            \arrow["{p_2}"{pos=0.3}, dashed, from=3-1, to=3-3]
            \arrow["{p_1}", from=3-1, to=4-2]
            \arrow["{f_2}", from=3-3, to=4-5]
            \arrow["{f_1}", from=4-2, to=4-5]
        \end{tikzcd}
    \end{displaymath}
    whose faces are fibered squares, each vertical edge is an open immersion, and every other edge is a proper morphism. 
    
    Observe, from the cube above, one has the following computation for each object $E$ in $D_{\operatorname{qc}}(Y_1)$:
    \begin{displaymath}
        \begin{aligned}
            s^\ast_{i2} & \Phi_K (E)  \cong s^\ast_{i2} \mathbf{R}p_{2,\ast}  (\mathbf{L}p_1^\ast E  \otimes^{\mathbf{L}} K) 
            \\&\cong \mathbf{R}q_{i2,\ast} t_i^\ast ( \mathbf{L}p_1^\ast E \otimes^{\mathbf{L}} K ) && \textrm{(\cite[Remark 22.94 \& Theorem 22.99]{Gortz/Wedhorn:2023})}
            \\&\cong \mathbf{R}q_{i2,\ast} ( t_i^\ast  \mathbf{L}p_1^\ast E \otimes^{\mathbf{L}} t_i^\ast K ) && \textrm{(\cite[\href{https://stacks.math.columbia.edu/tag/07A4 }{Tag 07A4 }]{StacksProject})}
            \\&\cong \mathbf{R}q_{i2,\ast} ( \mathbf{L}(p_1\circ t_i)^\ast E \otimes^{\mathbf{L}} t_i^\ast K )
            \\&\cong \mathbf{R}q_{i2,\ast} ( \mathbf{L}(s_{i1}\circ q_{i1})^\ast E \otimes^{\mathbf{L}} t_i^\ast K )  && (p_1\circ t_i = s_{i1}\circ q_{i1})
            \\&\cong \mathbf{R}q_{i2,\ast} ( \mathbf{L}q_{i1}^\ast s_{i1}^\ast E \otimes^{\mathbf{L}} t_i^\ast K )
            \\&\cong \Phi_{t_i^\ast K} (s_{i1}^\ast E).
        \end{aligned}
    \end{displaymath}
    Moreover, from \cite[Theorem 4.4]{Elagin/Lunts/Schnurer:2020}, we have a Verdier localization $s_{i1}^\ast \colon D^b_{\operatorname{coh}}(Y_1) \to D^b_{\operatorname{coh}}(Y_{i1}^\prime)$. Hence, if coupled with our hypothesis, it follows that $\Phi_{t_i^\ast K}\colon D^b_{\operatorname{coh}}(Y_{i1}^\prime) \to D_{\operatorname{qc}}(Y_{i2}^\prime)$ factors through $D^b_{\operatorname{coh}}(Y_{i2}^\prime)$ for each $i$. 
        
    Then, by \Cref{lem:integral_transform_bounded_coherent_iff_perf_projection}, one has that $\mathbf{R}q_{{i1},\ast} (t_i^\ast K\otimes^{\mathbf{L}} \operatorname{Perf}(Y^\prime_{i1}\times_S Y^\prime_{i2}))$ is contained in $\operatorname{Perf}(Y^\prime_{i1})$ for each $i$. However, once more from the cube above, we have another computation based on similar reasoning for each $E$ in $D_{\operatorname{qc}}(Y_1 \times_S Y_2)$:
    \begin{displaymath}
        \begin{aligned}
            s^\ast_{i1} \mathbf{R}p_{1,\ast} (K \otimes^{\mathbf{L}} E) 
            &\cong  \mathbf{R}q_{i1,\ast} t_i^\ast (K \otimes^{\mathbf{L}} E) 
            \\&\cong \mathbf{R}q_{i1,\ast} (t_i^\ast K \otimes^{\mathbf{L}} t^\ast_i E).
        \end{aligned}
    \end{displaymath}
    There is, from \cite[Theorem 2.1]{Neeman:1992}, a Verdier localization sequence,
    \begin{displaymath}
        D_{\operatorname{qc},Y_1\setminus Y_{i1}^\prime}(Y_1) \to D_{\operatorname{qc}} (Y_1) \xrightarrow{s_{i1}^\ast} D_{\operatorname{qc}} (Y_{i1}^\prime), 
    \end{displaymath}
    which induces a Verdier localization (up to summands) $s_{i1}^\ast \colon \operatorname{Perf}(Y_1) \to \operatorname{Perf}(Y_{i1}^\prime)$. It follows, for each $P$ in $\operatorname{Perf}(Y_1\times_S Y_2)$ and each $i$, that $s^\ast_{i1} \mathbf{R}p_{1,\ast} (K \otimes^{\mathbf{L}} P)$ is in $\operatorname{Perf}(Y_{i1}^\prime)$. This tells us any such object $\mathbf{R}p_{1,\ast} (K \otimes^{\mathbf{L}} P)$ must belong to $\operatorname{Perf}(Y_1)$ as desired.
\end{proof}

This brings us to our main result for the section. Recall an exact functor $F\colon \mathcal{T}_1 \to \mathcal{T}_2$ between triangulated categories \textit{induces} an exact functor between Verdier localizations $\mathcal{T}_1 / \mathcal{K}_1 \to \mathcal{T}_2 / \mathcal{K}_2$ if $F\colon \mathcal{K}_1 \to \mathcal{T}_2$ factors through $\mathcal{K}_2$.

\begin{theorem}\label{thm:integral_transform_induced_singularity}
    Let $f_1 \colon Y_1 \to S$ and $f_2 \colon Y_2 \to S$ be proper morphisms to a Noetherian scheme. Consider the fibered square:
    \begin{displaymath}
        \begin{tikzcd}[ampersand replacement=\&]
            {Y_1\times_S Y_2} \& {Y_2} \\
            {Y_1} \& S.
            \arrow["{p_2}", from=1-1, to=1-2]
            \arrow["{p_1}"', from=1-1, to=2-1]
            \arrow["{f_2}", from=1-2, to=2-2]
            \arrow["{f_1}"', from=2-1, to=2-2]
        \end{tikzcd}
    \end{displaymath}
    Then the following are equivalent for any object $K$ in $D^-_{\operatorname{coh}}(Y_1 \times_S Y_2)$:
    \begin{enumerate}
        \item $\Phi_K$ induces an exact functor $\breve{\Phi_K}\colon D_{\operatorname{sg}}(Y_1) \to D_{\operatorname{sg}}(Y_2)$
        \item $\mathbf{R}p_{i,\ast} (K\otimes^{\mathbf{L}} \operatorname{Perf}(Y_1\times_S Y_2))\subseteq \operatorname{Perf}(Y_i)$ for each $i$.
    \end{enumerate}
\end{theorem}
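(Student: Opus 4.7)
The plan is to reduce the theorem to a direct combination of Propositions~\ref{prop:characterize_perf_for_pseudocoherent_kernel} and~\ref{prop:preserves_bounded_coherence_non_affine_base_scheme}. The key observation is that, by the universal property of Verdier localization, $\Phi_K$ descends to an exact functor $\breve{\Phi_K}\colon D_{\operatorname{sg}}(Y_1) \to D_{\operatorname{sg}}(Y_2)$ if and only if
\begin{enumerate}
    \item[(a)] $\Phi_K(D^b_{\operatorname{coh}}(Y_1)) \subseteq D^b_{\operatorname{coh}}(Y_2)$, and
    \item[(b)] $\Phi_K(\operatorname{Perf}(Y_1)) \subseteq \operatorname{Perf}(Y_2)$.
\end{enumerate}
Indeed, (a) is needed merely to make sense of $\Phi_K$ at the level of $D^b_{\operatorname{coh}}$, and (b) is the condition that the composite $D^b_{\operatorname{coh}}(Y_1)\xrightarrow{\Phi_K} D^b_{\operatorname{coh}}(Y_2) \to D_{\operatorname{sg}}(Y_2)$ kills $\operatorname{Perf}(Y_1)$.

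Given this reformulation, the implication (1)$\Rightarrow$(2) follows by observing that (b) together with Proposition~\ref{prop:characterize_perf_for_pseudocoherent_kernel} gives $\mathbf{R}p_{2,\ast}(K\otimes^{\mathbf{L}}\operatorname{Perf}(Y_1\times_S Y_2)) \subseteq \operatorname{Perf}(Y_2)$, while (a) together with Proposition~\ref{prop:preserves_bounded_coherence_non_affine_base_scheme} gives $\mathbf{R}p_{1,\ast}(K\otimes^{\mathbf{L}}\operatorname{Perf}(Y_1\times_S Y_2)) \subseteq \operatorname{Perf}(Y_1)$. The converse direction (2)$\Rightarrow$(1) proceeds by applying the same two propositions in the reverse direction: the $i=2$ half of (2) yields (b), and the $i=1$ half of (2) yields (a), whence $\Phi_K$ restricts to a functor $D^b_{\operatorname{coh}}(Y_1)\to D^b_{\operatorname{coh}}(Y_2)$ sending $\operatorname{Perf}$ into $\operatorname{Perf}$, and so descends to singularity categories.

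There is no genuine obstacle at the level of this theorem itself; the entire content sits in the two preceding propositions. The harder piece of the preparation was Proposition~\ref{prop:preserves_bounded_coherence_non_affine_base_scheme}, where one reduces the non-affine base to the affine case via an affine open cover of $S$, uses the base change and projection formula identities $s_{i2}^\ast \Phi_K \cong \Phi_{t_i^\ast K}\circ s_{i1}^\ast$, and invokes the Verdier localization properties of open immersions on both $\operatorname{Perf}$ and $D^b_{\operatorname{coh}}$. With those in hand, the present theorem is simply the conjunction of the two preservation characterizations.
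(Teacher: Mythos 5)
Your proposal is correct and follows exactly the paper's intended route: the paper's proof of Theorem~\ref{thm:integral_transform_induced_singularity} is the one-line citation of Propositions~\ref{prop:characterize_perf_for_pseudocoherent_kernel} and~\ref{prop:preserves_bounded_coherence_non_affine_base_scheme}, and your decomposition of condition~(1) into the two containments (a)~$\Phi_K(D^b_{\operatorname{coh}}(Y_1))\subseteq D^b_{\operatorname{coh}}(Y_2)$ and (b)~$\Phi_K(\operatorname{Perf}(Y_1))\subseteq\operatorname{Perf}(Y_2)$, matched respectively to the $i=1$ and $i=2$ halves of condition~(2) via those two propositions, is precisely the unpacking the paper elides. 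You also correctly note that the real work lies in the non-affine reduction in Proposition~\ref{prop:preserves_bounded_coherence_non_affine_base_scheme}, with the theorem itself being a formal consequence.
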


\begin{proof}
    This follows from \Cref{prop:characterize_perf_for_pseudocoherent_kernel,prop:preserves_bounded_coherence_non_affine_base_scheme}
\end{proof}

\section{Some consequences}
\label{sec:some_consequences}

This section highlights a few examples and applications to our work in \Cref{sec:preservation}. To start, we provide an instance where an integral transform does not induce an exact functor between singularity categories.

\begin{example}
    Let $s_i\colon Y_i \to \operatorname{Spec}(k)$ be proper varieties over a field $k$ for $i=1,2$. Consider the fibered square:
    \begin{displaymath}
        \begin{tikzcd}[ampersand replacement=\&]
            {Y_1\times_S Y_2} \& {Y_2} \\
            {Y_1} \& \operatorname{Spec}(k).
            \arrow["{p_2}", from=1-1, to=1-2]
            \arrow["{p_1}"', from=1-1, to=2-1]
            \arrow["{s_2}", from=1-2, to=2-2]
            \arrow["{s_1}"', from=2-1, to=2-2]
        \end{tikzcd}
    \end{displaymath}
    Suppose $A_1$ is an object of $D^b_{\operatorname{coh}}(Y_1)$ and $A_2$ is an of object $D^b_{\operatorname{coh}}(Y_2)$ which is not in $\operatorname{Perf}(Y_2)$. Assume that $\mathbf{R}s_{1,\ast} (A_1 \otimes^\mathbf{L} D^b_{\operatorname{coh}}(Y_1))$ is contained in $D^b_{\operatorname{coh}}(\operatorname{Spec}(k))$ (e.g.\ $A_1$ is in $\operatorname{Perf}(Y_1)$). Then a direct computation shows the following:
    \begin{displaymath}
        \begin{aligned}
            \Phi_{p^\ast_1 A_1 \otimes^{\mathbf{L}} p_2^\ast A_2} (E) &:= \mathbf{R}p_{2,\ast} (p_1^\ast E \otimes^{\mathbf{L}} p^\ast_1 A_1 \otimes^{\mathbf{L}} p_2^\ast A_2 )
            \\&\cong \mathbf{R}p_{2,\ast} (p_1^\ast E \otimes^{\mathbf{L}} p^\ast_1 A_1) \otimes^{\mathbf{L}} A_2 && \textrm{(\cite[\href{https://stacks.math.columbia.edu/tag/08EU}{Tag 08EU}]{StacksProject})}
            \\&\cong \mathbf{R}p_{2,\ast} p_1^\ast (E \otimes^{\mathbf{L}} A_1) \otimes^{\mathbf{L}} A_2 && \textrm{(\cite[\href{https://stacks.math.columbia.edu/tag/07A4}{Tag 07A4}]{StacksProject})}
            \\&\cong s^\ast_2 \mathbf{R}s_{1,\ast} (E \otimes^{\mathbf{L}} A_1) \otimes^{\mathbf{L}} A_2 && \textrm{(\cite[Thm. 22.99]{Gortz/Wedhorn:2023})}
            \\&\cong (\bigoplus_{n\in \mathbb{Z}} s^\ast_2 \mathcal{O}_{\operatorname{Spec}(k)}^{\oplus r_n}[n]) \otimes^{\mathbf{L}} A_2
            \\&\cong (\bigoplus_{n\in \mathbb{Z}} \mathcal{O}_{Y_2}^{\oplus r_n}[n]) \otimes^{\mathbf{L}} A_2
            \\&\cong \bigoplus_{n\in \mathbb{Z}}A_2^{\oplus r_n}[n].
        \end{aligned}
    \end{displaymath}
    Observe, from our hypothesis on $A_1$, that $r_n\not=0$ for at most finitely\footnote{Any object of $D^b_{\operatorname{coh}}(\operatorname{Spec}(k))$ is isomorphic to an object of the form $\oplus_{t\in \mathbb{Z}} \mathcal{O}_{\operatorname{Spec}(k)}^{\oplus d_t}[t]$.} many $n$. This tells us, as $A_2$ is not in $\operatorname{Perf}(Y_2)$, 
    \begin{displaymath}
        \Phi_{p^\ast_1 A_1 \otimes^{\mathbf{L}} p_2^\ast A_2} \colon D^b_{\operatorname{coh}}(Y_1) \to D_{\operatorname{qc}}(Y_2)
    \end{displaymath}
    factors through $D^b_{\operatorname{coh}}(Y_2)$. However, 
    \begin{displaymath}
        \Phi_{p^\ast_1 A_1 \otimes^{\mathbf{L}} p_2^\ast A_2} \colon \operatorname{Perf}(Y_1) \to D_{\operatorname{qc}}(Y_2)
    \end{displaymath}
    cannot factor through $\operatorname{Perf}(Y_2)$. 
\end{example}

On the other hand, there are classical instances where \Cref{thm:integral_transform_induced_singularity} is applicable. 

\begin{example}\label{ex:bergh_schnurer_stacks}
    Let $X$ be a variety over a field.
    \begin{enumerate}
        \item Consider the projectivization $p\colon \mathbb{P}_X(\mathcal{E}) \to X$ of finite locally free sheaf $\mathcal{E}$ on $X$. Then, by \cite[Theorem 6.7]{Bergh/Schnurer:2020} functors $\Phi_n \colon E\mapsto p^\ast E \otimes^{\mathbf{L}} \mathcal{O}_{\mathbb{P}_X(\mathcal{E})} (n)$ are integral transforms (with kernel $\mathcal{O}_{\mathbb{P}_X(\mathcal{E})} (n)$) for each integer $n$. Moreover, \cite[Corollary 6.8]{Bergh/Schnurer:2020} ensures each $\Phi_n$ preserves both perfect complexes and those with bounded coherent cohomology.
        \item Suppose $i\colon Z \to X$ is a closed immersion that is regular (in the sense of \cite[\href{https://stacks.math.columbia.edu/tag/0638}{Tag 0638}]{StacksProject}) of constant codimension $c\geq 0$. Denote by $f\colon \widetilde{X}\to X$ for the blowup of $X$ along $Z$. Consider the following fibered square:
        \begin{displaymath}
            \begin{tikzcd}
                E & {\widetilde{X}} \\
                Z & X
                \arrow["{i^\prime}", from=1-1, to=1-2]
                \arrow["{f^\prime}"', from=1-1, to=2-1]
                \arrow["f", from=1-2, to=2-2]
                \arrow["i"', from=2-1, to=2-2]
            \end{tikzcd}
        \end{displaymath}
        where $E$ is the exceptional divisor. Then, by \cite[Theorem 6.9]{Bergh/Schnurer:2020}, the functors $\Phi_j\colon A \mapsto \mathcal{O}_{\widetilde{X}}(-j \cdot E) \otimes^{\mathbf{L}} \mathbf{R}i^\prime_\ast \mathbf{L}(f^\prime)^\ast A$ are integral transforms if $j\leq 0$. Moreover, from \cite[Corollary 6.10]{Bergh/Schnurer:2020}, if $-c+1 \leq j \leq 0$, one has that $\Phi_j$ preserves perfect complexes and objects with bounded coherent cohomology.
    \end{enumerate}
\end{example}

The following result is a (natural) generalization of \cite[Lemma's 3.22 \& 3.23]{Ballard:2009}. Before doing so, we remind ourselves of two facts regarding adjoints for triangulated categories.

\begin{itemize}
    \item Let $F\colon \mathcal{T}\to \mathcal{S}$ be an exact functor between triangulated categories. Assume $\mathcal{T}$ is compactly generated. Then $F$ admits a right adjoint if, and only if, it preserves small coproducts. See \cite[Theorem 4.1]{Neeman:1996}.
    \item Let $F\colon \mathcal{S} \rightleftarrows \mathcal{T} \colon G$ be an adjoint pair of exact functors between triangulated categories. Assume $\mathcal{S}$ is compactly generated. Then $F$ preserves compacts if, and only if, $G$ preserves small coproducts. See \cite[Theorem 5.1]{Neeman:1996}.
\end{itemize}

\begin{proposition}
    \label{prop:integral_transform_on_dbcoh_iff_left_adjoint}
    With the notation of \Cref{thm:integral_transform_induced_singularity}, assume additionally that $S$ is affine.
    Then $\Phi_K\colon D_{\operatorname{qc}}(Y_1) \to D_{\operatorname{qc}}(Y_2)$ admits a left adjoint if, and only if, the functor $\Phi_K \colon D^b_{\operatorname{coh}}(Y_1) \to D_{\operatorname{qc}}(Y_2)$ factors through $D^b_{\operatorname{coh}}(Y_2)$. In such a situation, $\Phi_{K^\prime}$ is the left adjoint to $\Phi_K$ on $D_{\operatorname{qc}}$, 
    where $K^\prime:= \mathbf{R} \operatorname{\mathcal{H}\! \mathit{om}} (K, p^!_1 \mathcal{O}_{Y_1})$. 
\end{proposition}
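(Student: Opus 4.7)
My plan is to split the equivalence into two implications, and in the reverse direction simultaneously exhibit $\Phi_{K'}$ as the left adjoint. The key ingredients will be \Cref{lem:approx_by_proper_lift_functors}, \Cref{prop:preserves_bounded_coherence_non_affine_base_scheme}, the argument used in the forward direction of \Cref{lem:pushforward_preserve_perfect_iff_pullback_coherent_bounded}, and the adjoint-pair criteria recorded just before the proposition. Throughout, I would invoke that $\Phi_K$ preserves small coproducts: $\mathbf{L}p_1^\ast$ and $(-)\otimes^{\mathbf{L}}K$ do tautologically, and $\mathbf{R}p_{2,\ast}$ does so because $p_2$ is qcqs.

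For $(1)\Rightarrow(2)$, assume $\Phi_K$ admits a left adjoint $L$. The second adjoint-pair criterion, applied to $L \dashv \Phi_K$ with $D_{\operatorname{qc}}(Y_2)$ compactly generated, forces $L(\operatorname{Perf}(Y_2)) \subseteq \operatorname{Perf}(Y_1)$. To see $\Phi_K(A) \in D^b_{\operatorname{coh}}(Y_2)$ for $A \in D^b_{\operatorname{coh}}(Y_1)$, I would use the description from \Cref{lem:approx_by_proper_lift_functors} of $D^b_{\operatorname{coh}}(Y_i)$ as finite cohomological functors on $\operatorname{Perf}(Y_i)^{\operatorname{op}}$ valued in $\operatorname{Mod}(R)$ and, for $P \in \operatorname{Perf}(Y_2)$, compute
\[
\bigoplus_n \operatorname{Hom}(P, \Phi_K(A)[n]) \;\cong\; \bigoplus_n \operatorname{Hom}(L(P), A[n]).
\]
Since $L(P) \in \operatorname{Perf}(Y_1)$ and $A$ corresponds to a finite cohomological functor on $\operatorname{Perf}(Y_1)^{\operatorname{op}}$, this graded $R$-module is finitely generated, so $\Phi_K(A)$ itself represents a finite cohomological functor and therefore lies in $D^b_{\operatorname{coh}}(Y_2)$.

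For $(2)\Rightarrow(1)$, hypothesis (2) combined with \Cref{prop:preserves_bounded_coherence_non_affine_base_scheme} yields $\mathbf{R}p_{1,\ast}(K\otimes^{\mathbf{L}}\operatorname{Perf}(Y_1\times_S Y_2)) \subseteq \operatorname{Perf}(Y_1)$. This is exactly the perfectness hypothesis under which the argument in the forward direction of \Cref{lem:pushforward_preserve_perfect_iff_pullback_coherent_bounded} produces the adjunction
\[
\mathbf{R}p_{1,\ast}\bigl(K' \otimes^{\mathbf{L}} (-)\bigr) \;\dashv\; K \otimes^{\mathbf{L}} \mathbf{L}p_1^\ast(-)
\]
between $D_{\operatorname{qc}}(Y_1\times_S Y_2)$ and $D_{\operatorname{qc}}(Y_1)$, with $K' := \mathbf{R}\operatorname{\mathcal{H}\! \mathit{om}}(K, p_1^!\mathcal{O}_{Y_1})$. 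Composing with the standard $\mathbf{L}p_2^\ast \dashv \mathbf{R}p_{2,\ast}$ would then give
\[
\Phi_{K'} \;=\; \mathbf{R}p_{1,\ast}\bigl(K'\otimes^{\mathbf{L}}\mathbf{L}p_2^\ast(-)\bigr) \;\dashv\; \mathbf{R}p_{2,\ast}\bigl(K\otimes^{\mathbf{L}}\mathbf{L}p_1^\ast(-)\bigr) \;=\; \Phi_K
\]
on $D_{\operatorname{qc}}$, which simultaneously establishes the existence claim of (1) and identifies the left adjoint as $\Phi_{K'}$.

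The main obstacle I anticipate is substantiating the adjunction $\mathbf{R}p_{1,\ast}(K' \otimes^{\mathbf{L}}(-)) \dashv K \otimes^{\mathbf{L}}\mathbf{L}p_1^\ast(-)$ on all of $D_{\operatorname{qc}}$: unwinding it via Grothendieck duality for the proper morphism $p_1$ reduces matters to a double-dualization isomorphism $\mathbf{R}\operatorname{\mathcal{H}\! \mathit{om}}(K', p_1^!(-)) \cong K \otimes^{\mathbf{L}} \mathbf{L}p_1^\ast(-)$, which is not automatic for merely pseudocoherent $K$. The perfectness consequence of hypothesis (2) is precisely the input that lets one check the identification on a compact generator of $D_{\operatorname{qc}}(Y_1)$ and then extend by passage to coproducts and exact triangles, paralleling the reasoning already deployed in \Cref{lem:pushforward_preserve_perfect_iff_pullback_coherent_bounded}.
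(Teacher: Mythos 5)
Your proof follows essentially the same route as the paper's. In the forward direction, both you and the paper observe that $\Phi_K$ preserves coproducts (so the left adjoint $L$ preserves compacts, hence perfect complexes), and both then land in $D^b_{\operatorname{coh}}(Y_2)$ via the description in \Cref{lem:approx_by_proper_lift_functors}. The one place you phrase things a little loosely is the last step: knowing that $P \mapsto \bigoplus_n \operatorname{Hom}(P,\Phi_K(A)[n])$ is a finite cohomological functor tells you there is \emph{some} $B \in D^b_{\operatorname{coh}}(Y_2)$ with the same restricted Yoneda, but to conclude $\Phi_K(A)\cong B$ you need the restricted Yoneda to be (fully) faithful on a subcategory containing $\Phi_K(A)$. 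The clean way to close this is to first note $\Phi_K(A)\in D^-_{\operatorname{coh}}(Y_2)$ (since $K$ is pseudocoherent and $p_2$ is proper) and then invoke that the restricted Yoneda is fully faithful on $D^-_{\operatorname{coh}}$; the paper's phrasing via the uniqueness clause of \Cref{lem:approx_by_proper_lift_functors} hides the same input. This is a gap in exposition, not in substance.

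In the converse direction you recover the perfectness condition via \Cref{prop:preserves_bounded_coherence_non_affine_base_scheme} (the paper uses the affine version \Cref{lem:integral_transform_bounded_coherent_iff_perf_projection}, which is what's directly in scope, but since $S$ is assumed affine both apply). Your ``main obstacle'' paragraph correctly flags that the adjunction $\mathbf{R}p_{1,\ast}(K'\otimes^{\mathbf{L}}(-)) \dashv K\otimes^{\mathbf{L}}\mathbf{L}p_1^\ast(-)$ is not free for an arbitrary pseudocoherent $K$ — one really does need the perfectness consequence of (2) as input before invoking it. The paper simply cites \cite[Lemma~3.10]{Ballard:2009} at exactly this spot, after establishing the perfectness; that citation is what you would use to discharge the obstacle you anticipate, so your concern is legitimate but already resolved by the intended reference. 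Net: same proof, with slightly different bookkeeping in the forward direction and an explicitly flagged (and addressable) subtlety in the converse.
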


\begin{proof}
    First, we prove the forward direction where $\Phi_K$ admits a left adjoint $\Phi$ as a functor on $D_{\operatorname{qc}}$. Observe that $\Phi_K$ always admits a right adjoint as a functor on $D_{\operatorname{qc}}$ that is given by
    \begin{displaymath}
        E \mapsto \mathbf{R} p_{1,\ast} \mathbf{R} \operatorname{\mathcal{H}\! \mathit{om}}(K,\mathbf{L}p_2^\ast E).
    \end{displaymath}
    This means $\Phi_K$ preserves small coproducts, and so, $\Phi$ preserves compacts. In other words, $\Phi(\operatorname{Perf}(Y_2))\subseteq \operatorname{Perf}(Y_1)$. Then \Cref{lem:approx_by_proper_lift_functors} tells us $\Phi_K (D^b_{\operatorname{coh}}(Y_1))\subseteq D^b_{\operatorname{coh}}(Y_2)$ as desired, e.g.\ the restriction of $\Phi_K$ to $D^b_{\operatorname{coh}}(Y_1)$ agrees with the unique functor in \Cref{lem:approx_by_proper_lift_functors} (indeed, \cite[Example 0.7]{Neeman:2021c} gives a triangulated equivalence between the category of finite cohomological functors on perfect complexes and the bounded derived category of bounded pseudocoherent complexes).

    Next, we check the converse direction. It follows from \Cref{lem:integral_transform_bounded_coherent_iff_perf_projection} that $\mathbf{R}p_{1,\ast} (K \otimes^{\mathbf{L}} \operatorname{Perf}(Y_1\times_S Y_2))$ is contained in $\operatorname{Perf}(Y_1)$. There is a string of natural isomorphisms:
    \begin{displaymath}
        \begin{aligned}
            \operatorname{Hom}& (E,\Phi_K (G))
            \cong \operatorname{Hom}( E ,\mathbf{R}p_{2,\ast} (K \otimes^{\mathbf{L}} \mathbf{L}p_1^\ast G)) && \textrm{(Definition)}
            \\&\cong \operatorname{Hom}( \mathbf{L}p_2^\ast E , K \otimes^{\mathbf{L}} \mathbf{L}p_1^\ast G) && \textrm{(Adjunction)}
            \\&\cong \operatorname{Hom}( \mathbf{R}p_{1,\ast} ( \mathbf{R} \operatorname{\mathcal{H}\! \mathit{om}} (K, p^!_1 \mathcal{O}_{Y_1}) \otimes^{\mathbf{L}} \mathbf{L}p_2^\ast E ) , G) && (\textrm{\cite[Lemma 3.10]{Ballard:2009} with } p_1)
        \end{aligned}
    \end{displaymath}
    This shows the desired claim.
\end{proof}

We provide a strengthening of \cite[Proposition 9]{Olander:2023} to the case of varieties with mild singularities. Recall that a variety $X$ over a field $k$ is called a \textit{(birational) derived splinter} if the natural map $\mathcal{O}_X \to \mathbf{R}f_\ast \mathcal{O}_Y$ splits for every proper (resp.\ birational) surjective morphism $f\colon Y \to X$. The notion of derived splinters was introduced by Bhatt \cite{Bhatt:2012}, while the birational variant originates in unpublished work of Kov\'{a}cs. Over characteristic zero the two notions coincide with having rational singularities (see \cite{Lank:2025}), but in positive characteristic birational derived splinters need not be derived splinters (see e.g.\ \cite[Example 4.16]{Lank:2025}).

\begin{proposition}
    \label{prop:extending_olander_splinters}
    Let $Y_1$ and $Y_2$ be birational derived splinters that are proper over an uncountable field $k$. Consider the following situation:
    \begin{itemize}
        \item for each $i$ there is a resolution of singularities $\pi_i \colon \widetilde{Y}_i \to Y_i$
        \item there is $K\in D^-_{\operatorname{coh}}(Y_1\times_k Y_2)$ such that $\Phi_K\colon D^b_{\operatorname{coh}}(Y_1) \to D^b_{\operatorname{coh}}(Y_2)$ is fully faithful.
    \end{itemize}
    If $\Phi_K(\operatorname{Perf}(Y_1)) \subseteq \operatorname{Perf}(Y_2)$, then $\dim Y_1 \leq \dim Y_2$.
\end{proposition}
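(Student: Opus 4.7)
The plan is to reduce the statement to \cite[Proposition 6.8]{DeDeyn/Lank/ManaliRahul:2024b}, which furnishes Olander-type dimension bounds for fully faithful integral transforms between birational derived splinter varieties as long as the functor preserves perfect complexes \emph{and} admits a left adjoint on the ambient quasi-coherent derived categories. The contribution of this section is to supply the missing adjointness hypothesis, after which the dimension inequality is imported directly.

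Concretely, the first step is to observe that the restriction $\Phi_K \colon D^b_{\operatorname{coh}}(Y_1) \to D_{\operatorname{qc}}(Y_2)$ factors through $D^b_{\operatorname{coh}}(Y_2)$ by hypothesis. Since the base $S=\operatorname{Spec}(k)$ is affine Noetherian, Proposition \ref{prop:integral_transform_on_dbcoh_iff_left_adjoint} is applicable and produces a left adjoint $\Phi_{K'} \colon D_{\operatorname{qc}}(Y_2) \to D_{\operatorname{qc}}(Y_1)$ to $\Phi_K$ at the quasi-coherent level, with $K' = \mathbf{R}\operatorname{\mathcal{H}\! \mathit{om}}(K, p_1^! \mathcal{O}_{Y_1})$. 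Combining this with the standing assumption $\Phi_K(\operatorname{Perf}(Y_1)) \subseteq \operatorname{Perf}(Y_2)$ places the data $(Y_1, Y_2, \Phi_K)$ squarely within the scope of the external reference: two proper birational derived splinters over an uncountable field, each equipped with a resolution of singularities, connected by a fully faithful integral transform that preserves perfects and admits a left adjoint on $D_{\operatorname{qc}}$. Invoking \cite[Proposition 6.8]{DeDeyn/Lank/ManaliRahul:2024b} then yields $\dim Y_1 \leq \dim Y_2$.

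The main obstacle is entirely absorbed into the external reference, whose own strategy is to transfer $\Phi_K$ along the resolutions $\pi_1, \pi_2$ by exploiting the splittings $\mathcal{O}_{Y_i} \to \mathbf{R}\pi_{i,\ast}\mathcal{O}_{\widetilde{Y}_i}$ granted by the birational derived splinter property, and then to appeal to Olander's smooth result \cite[Proposition 9]{Olander:2023} on $\widetilde{Y}_1, \widetilde{Y}_2$. From the vantage point of this manuscript, the only non-routine check is that Proposition \ref{prop:integral_transform_on_dbcoh_iff_left_adjoint} is applicable in this generality — which it is, precisely because we have upgraded from projective to proper schemes over a Noetherian base earlier in the paper.
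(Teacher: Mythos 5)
Your proposal takes a genuinely different route from the paper's and has a gap. The paper does not pass through the left adjoint from \Cref{prop:integral_transform_on_dbcoh_iff_left_adjoint}; it instead uses the \emph{right} adjoint $\Phi$ of $\Phi_K$ (which always exists), shows via \Cref{lem:approx_by_proper_lift_functors} together with the hypothesis $\Phi_K(\operatorname{Perf}(Y_1))\subseteq\operatorname{Perf}(Y_2)$ that $\Phi$ restricts to $D^b_{\operatorname{coh}}(Y_2)\to D^b_{\operatorname{coh}}(Y_1)$, and then deduces from full faithfulness of $\Phi_K$ that this restriction is a Verdier localization, yielding $\operatorname{Cdim} D^b_{\operatorname{coh}}(Y_1)\leq\operatorname{Cdim} D^b_{\operatorname{coh}}(Y_2)$. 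The dimension inequality then follows from the equality $\operatorname{Cdim} D^b_{\operatorname{coh}}(Y_i)=\dim Y_i$, which the paper establishes by \emph{augmenting} \cite[Proposition 6.7]{DeDeyn/Lank/ManaliRahul:2024b} (stated for derived splinters) to the case of birational derived splinters and combining with \cite[Lemma 7]{Olander:2023}.

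The concrete gap in your argument: you cite \cite[Proposition 6.8]{DeDeyn/Lank/ManaliRahul:2024b} as though it were already stated for birational derived splinters, but the paper's own proof takes care to note that the cited material needs to be extended for this weaker splitting hypothesis, and you absorb that extension silently into a black box. A secondary issue is that you attribute to the external reference a strategy---transferring $\Phi_K$ along the resolutions $\pi_i$ and applying Olander's smooth result on $\widetilde{Y}_1,\widetilde{Y}_2$---that does not match what the paper does: the resolution morphisms only enter through the computation of countable Rouquier dimension, not via any transferred functor. Finally, if you do wish to run the argument through the left adjoint $\Phi_{K'}$, you would still need to establish that $\Phi_{K'}$ takes $D^b_{\operatorname{coh}}(Y_2)$ into $D^b_{\operatorname{coh}}(Y_1)$; \Cref{prop:integral_transform_on_dbcoh_iff_left_adjoint} only produces the adjoint on $D_{\operatorname{qc}}$, whereas the paper's right-adjoint route gets the needed restriction for free from \Cref{lem:approx_by_proper_lift_functors}.
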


\begin{proof}
    First, one can augment the proof \cite[Proposition 6.7]{DeDeyn/Lank/ManaliRahul:2024b} for birational derived splinters as the same splitting condition holds on the unit morphisms. Now, if coupled with \cite[Lemma 7]{Olander:2023}, we see that $\operatorname{Cdim} D^b_{\operatorname{coh}}(Y_i) = \dim  Y_i$ for each $i$. Moreover, $\Phi_K$ always has a right adjoint $\Phi$ (see proof of \Cref{prop:integral_transform_on_dbcoh_iff_left_adjoint}). Then \Cref{lem:approx_by_proper_lift_functors}, coupled with our hypothesis, ensures $\Phi$ restricts to give an exact functor $D^b_{\operatorname{coh}}(Y_2) \to D^b_{\operatorname{coh}}(Y_1)$. However, $\Phi_K$ being fully faithful ensures $\Phi \circ \Phi_K \to \mathbf{1}$ is an isomorphism (see e.g.\ \cite[\href{https://stacks.math.columbia.edu/tag/07RB}{Tag 07RB}]{StacksProject}), which in turn implies that $D^b_{\operatorname{coh}}(Y_2) \to D^b_{\operatorname{coh}}(Y_1)$ is a Verdier localization. This immediately implies that $\operatorname{Cdim}D^b_{\operatorname{coh}}(Y_1) \leq  \operatorname{Cdim}D^b_{\operatorname{coh}}(Y_2)$, and hence $\dim Y_1 \leq \dim Y_2$.
\end{proof}

\begin{example}\label{ex:rational_singularities}
    In characteristic zero, being a (birational) derived splinter is equivalent to having rational singularities; see \cite{Kovacs:2000, Bhatt:2012,Lank:2025} for details. Moreover, \cite{Hironaka:1964a,Hironaka:1964b} ensures the existence of resolution of singularities in characteristic zero. 
\end{example}

\begin{proposition}\label{prop:obstruction_to_singular_equivalence}
    Let $f_1\colon Y_1 \to S$ and $f_2\colon Y_2 \to S$ be proper morphisms to a quasi-compact regular scheme where at least one such morphism is flat. Suppose $Y_1$ and $Y_2$ are Fourier--Mukai $S$-partners given by a kernel $K$ in $D^b_{\operatorname{coh}}(Y_1\times_S Y_2)$. If $Y_1$ or $Y_2$ is not regular, then $K\not\in\operatorname{Perf}(Y_1\times_S Y_2)$.
\end{proposition}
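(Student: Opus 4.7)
The plan is to argue by contrapositive: assume $K \in \operatorname{Perf}(Y_1 \times_S Y_2)$ and deduce that both $Y_1$ and $Y_2$ are regular. Without loss of generality take $f_1$ flat, so that by base change $p_2 \colon Y_1 \times_S Y_2 \to Y_2$ is proper and flat. For any $E \in \operatorname{Perf}(Y_1)$ the complex $\mathbf{L}p_1^\ast E \otimes^{\mathbf{L}} K$ is perfect, and $\mathbf{R}p_{2,\ast}$ preserves perfection since $p_2$ is proper and flat; hence $\Phi_K(\operatorname{Perf}(Y_1)) \subseteq \operatorname{Perf}(Y_2)$.

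Since $K$ is perfect, $\mathbf{R}\operatorname{\mathcal{H}\! \mathit{om}}(K, -) \cong K^\vee \otimes^{\mathbf{L}}(-)$, and so the standard formula $E \mapsto \mathbf{R}p_{1,\ast}\mathbf{R}\operatorname{\mathcal{H}\! \mathit{om}}(K, \mathbf{L}p_2^\ast E)$ for the right adjoint to $\Phi_K$ on $D_{\operatorname{qc}}$ identifies this adjoint with the integral transform $\Phi_{K^\vee}$ (in the direction $Y_2 \to Y_1$) whose kernel $K^\vee$ is again perfect. The $D^b_{\operatorname{coh}}$-equivalence $\Phi_K$ extends to a $D_{\operatorname{qc}}$-equivalence by Canonaco--Neeman--Stellari, and the right adjoint of any equivalence is its inverse, so $\Phi_K^{-1} \cong \Phi_{K^\vee}$ on $D_{\operatorname{qc}}$.

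The key step applies Mukai's convolution formula to $\Phi_K \circ \Phi_{K^\vee} = \mathrm{id}_{D_{\operatorname{qc}}(Y_2)}$: its kernel on $Y_2 \times_S Y_2$ is
\[
M \;:=\; \mathbf{R}r_{13,\ast}\bigl(r_{12}^\ast K^\vee \otimes^{\mathbf{L}} r_{23}^\ast K\bigr),
\]
where the $r_{ij}$ are the projections from the triple fibre product $Y_2 \times_S Y_1 \times_S Y_2$. Since $r_{13}$ is the base change of $f_1$, it is proper and flat; the integrand is perfect, and flat-proper pushforward preserves perfection, so $M \in \operatorname{Perf}(Y_2 \times_S Y_2)$. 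By uniqueness of the Fourier--Mukai kernel representing the identity, $M \cong \mathcal{O}_{\Delta_{Y_2/S}}$, and thus the diagonal sheaf of $Y_2/S$ is perfect on $Y_2 \times_S Y_2$.

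The final step deduces regularity: perfection of $\mathcal{O}_{\Delta_{Y_2/S}}$ forces the diagonal immersion $\Delta_{Y_2/S}$ to be Koszul-regular, which combined with regularity of $S$ and properness of $f_2$ forces $Y_2$ to be regular. Then $D^b_{\operatorname{coh}}(Y_2) = \operatorname{Perf}(Y_2)$, and since $\Phi_K$ is an equivalence on both $D^b_{\operatorname{coh}}$ and $\operatorname{Perf}$, also $D^b_{\operatorname{coh}}(Y_1) = \operatorname{Perf}(Y_1)$, i.e., $Y_1$ is regular, contradicting the hypothesis. The main obstacle is this last deduction: extracting regularity of $Y_2$ from perfection of $\mathcal{O}_{\Delta_{Y_2/S}}$ without a priori flatness of $f_2$. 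This should follow from classical criteria characterizing smoothness (or at least regularity) via perfection of the diagonal or of the cotangent complex, together with regularity of the base $S$.
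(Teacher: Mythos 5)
Your approach diverges fundamentally from the paper's, and the final two steps contain genuine gaps. The step ``by uniqueness of the Fourier--Mukai kernel representing the identity, $M \cong \mathcal{O}_{\Delta_{Y_2/S}}$'' is unsupported at this level of generality: kernel uniqueness is a deep theorem (Orlov; Lunts--Orlov) that requires smooth projective hypotheses, and for possibly singular, proper-but-not-projective schemes over a general Noetherian base it is not known that naturally isomorphic integral transforms have isomorphic kernels --- even when the target functor is the identity. Your argument shows that $\Phi_M$ and the identity are naturally isomorphic as functors, but that alone does not identify $M$ with the derived pushforward of $\mathcal{O}_{Y_2}$ along the diagonal. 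Second, the concluding deduction of regularity of $Y_2$ from perfection of $\mathcal{O}_{\Delta_{Y_2/S}}$ is, as you flag yourself, ``the main obstacle,'' and you do not fill it: even granting that perfection of $\mathbf{R}\Delta_{\ast}\mathcal{O}_{Y_2}$ makes the diagonal a Koszul-regular immersion, passing from there to smoothness (and hence regularity) of $f_2 \colon Y_2 \to S$ normally requires flatness of $f_2$, which is not assumed here; without it $Y_2 \times_S Y_2$ and the diagonal criterion need not behave as expected.

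The paper avoids both problems by working purely with generation statements and never leaving $D^b_{\operatorname{coh}}$. It takes classical generators $G_i$ of $\operatorname{Perf}(Y_i)$, so that $\mathbf{L}\pi_1^\ast G_1 \otimes^{\mathbf{L}} \mathbf{L}\pi_2^\ast G_2$ classically generates $\operatorname{Perf}(Y_1 \times_S Y_2)$; since $K$ is assumed perfect it is finitely built from this generator. For $E \in D^b_{\operatorname{coh}}(Y_2)$ with preimage $E'$ under $\Phi_K$, the projection formula and this building give $G_2 \otimes^{\mathbf{L}} E \in \bigl\langle \mathbf{R}\pi_{2,\ast}\bigl(\mathbf{L}\pi_1^\ast(E' \otimes^{\mathbf{L}} G_1)\bigr) \otimes^{\mathbf{L}} G_2 \bigr\rangle$; flat base change and regularity of $S$ place $\mathbf{R}\pi_{2,\ast}\bigl(\mathbf{L}\pi_1^\ast(E' \otimes^{\mathbf{L}} G_1)\bigr)$ inside $\langle \mathbf{L}f_2^\ast G \rangle \subseteq \langle G_2 \rangle$, so $G_2 \otimes^{\mathbf{L}} E$ and hence $E$ is perfect. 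This yields $D^b_{\operatorname{coh}}(Y_2) = \operatorname{Perf}(Y_2)$, contradicting non-regularity of $Y_2$ (one first invokes Canonaco--Neeman--Stellari to pass non-regularity from one $Y_i$ to the other). In particular, the paper's proof never needs to extend $\Phi_K$ to a $D_{\operatorname{qc}}$-equivalence, never needs kernel uniqueness, and says nothing about the diagonal.
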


\begin{proof}
    We prove the claim by contradiction. That is, $K\in \operatorname{Perf}(Y_1\times_S Y_2)$. The hypothesis is that $\Phi_K$ restricts to a triangulated equivalence $D^b_{\operatorname{coh}}(Y_1) \to D^b_{\operatorname{coh}}(Y_2)$. However, as one of the $Y_i$ is not regular, \cite[Corollary 5.9]{Canonaco/Neeman/Stellari:2024} tells us the other $Y_j$ cannot be regular. Let $G_i$ be a classical generator for $\operatorname{Perf}(Y_i)$. Denote by $\pi_i \colon Y_1 \times_S Y_2 \to Y_i$ the projection morphisms. Then \cite[Lemma 3.4.1]{Bondal/VandenBergh:2003} tells us $\mathbf{L} \pi_1^\ast G_1 \otimes^{\mathbf{L}} \mathbf{L} \pi_2^\ast G_2$ is a classical generator for $\operatorname{Perf}(Y_1 \times_S Y_2)$. Choose an object $E$ in $D^b_{\operatorname{coh}}(Y_2)$. There is $E^\prime$ in $D^b_{\operatorname{coh}}(Y_1)$ such that $\mathbf{R}\pi_{2,\ast} (\mathbf{L} \pi^\ast_1 E^\prime \otimes^{\mathbf{L}} K)\cong E$. Note that $K$ being perfect means $K$ is finitely built by $\mathbf{L} \pi_1^\ast G_1 \otimes^{\mathbf{L}} \mathbf{L} \pi_2^\ast G_2$. Observe that projection formula tells us
    \begin{displaymath}
        G_2 \otimes^{\mathbf{L}} E \cong \mathbf{R}\pi_{2,\ast} (\mathbf{L} \pi^\ast_1 E^\prime \otimes^{\mathbf{L}} K \otimes^{\mathbf{L}} \mathbf{L} \pi_2^\ast G_2).
    \end{displaymath}
    Clearly, $\mathbf{L} \pi^\ast_1 E^\prime \otimes^{\mathbf{L}} K \otimes^{\mathbf{L}} \mathbf{L} \pi_2^\ast G_2$ is finitely built by $\mathbf{L} \pi^\ast_1 E^\prime \otimes^{\mathbf{L}} \mathbf{L} \pi_1^\ast G_1 \otimes^{\mathbf{L}} \mathbf{L} \pi_2^\ast G_2$. Hence, we have that 
    \begin{displaymath}
        \begin{aligned}
            G_2 \otimes^{\mathbf{L}} E &\cong \mathbf{R}\pi_{2,\ast}( \mathbf{L} \pi^\ast_1 E^\prime \otimes^{\mathbf{L}} K \otimes^{\mathbf{L}} \mathbf{L} \pi_2^\ast G_2) \\& \in \langle\mathbf{R}\pi_{2,\ast} (\mathbf{L} \pi^\ast_1 E^\prime \otimes^{\mathbf{L}} \mathbf{L} \pi_1^\ast G_1 \otimes^{\mathbf{L}} \mathbf{L} \pi_2^\ast G_2) \rangle.
        \end{aligned}
    \end{displaymath}
    We have an isomorphism
    \begin{displaymath}
        \mathbf{R}\pi_{2,\ast} (\mathbf{L} \pi^\ast_1 E^\prime \otimes^{\mathbf{L}} \mathbf{L} \pi_1^\ast G_1 \otimes^{\mathbf{L}} \mathbf{L} \pi_2^\ast G_2)\cong \mathbf{R}\pi_{2,\ast} (\mathbf{L} \pi^\ast_1 E^\prime \otimes^{\mathbf{L}} \mathbf{L} \pi_1^\ast G_1 ) \otimes^{\mathbf{L}} G_2.
    \end{displaymath}
    Since $S$ is regular, $D^b_{\operatorname{coh}}(S)= \operatorname{Perf}(S)$. Let $G$ be a classical generator for $\operatorname{Perf}(S)$. By flat base change, 
    \begin{displaymath}
        \mathbf{R}\pi_{2,\ast} (\mathbf{L} \pi^\ast_1 E^\prime \otimes^{\mathbf{L}} \mathbf{L} \pi_1^\ast G_1 ) \in \langle \mathbf{L} f_2^\ast G\rangle.
    \end{displaymath}
    However, $\mathbf{L} f_2^\ast G\in \operatorname{Perf}(Y_2) = \langle G_2 \rangle$. This tells us that $G_2 \otimes^{\mathbf{L}} E$ is finitely built by $G_2$; that is, $G_2 \otimes^{\mathbf{L}} E$ is perfect. But this is absurd as it implies $E \in \operatorname{Perf}(Y_2)$. To be precise, we have shown $\Phi_K (D^b_{\operatorname{coh}}(Y_1))\subseteq \operatorname{Perf}(Y_2)$, and yet $\Phi_K$ restricts to give a triangulated equivalence $D^b_{\operatorname{coh}}(Y_1) \to D^b_{\operatorname{coh}}(Y_2)$. So being that $Y_2$ is not regular, we would obtain from the work above that $\operatorname{Perf}(Y_2) = D^b_{\operatorname{coh}}(Y_2)$, which is a contradiction.
\end{proof}

\bibliographystyle{alpha}
\bibliography{mainbib}

\newcommand{\etalchar}[1]{$^{#1}$}
\begin{thebibliography}{ATJLSdS23}

\bibitem[ATJLSdS23]{AlonsoTarrio/JeremiasLopez/SanchodeSalas:2023}
Leovigildo Alonso~Tarr{\'{\i}}o, Ana Jerem{\'{\i}}as~L{\'o}pez, and Fernando Sancho~de Salas.
\newblock Relative perfect complexes.
\newblock {\em Math. Z.}, 304(3):25, 2023.
\newblock Id/No 42.

\bibitem[Bal09]{Ballard:2009}
Matthew~Robert Ballard.
\newblock Equivalences of derived categories of sheaves on quasi-projective schemes.
\newblock \href{https://arXiv.org/abs/0905.3148}{arXiv:0905.3148}, 2009.

\bibitem[Bha12]{Bhatt:2012}
Bhargav Bhatt.
\newblock Derived splinters in positive characteristic.
\newblock {\em Compos. Math.}, 148(6):1757--1786, 2012.

\bibitem[BIL{\etalchar{+}}23]{BILMP:2023}
Matthew~R. Ballard, Srikanth~B. Iyengar, Pat Lank, Alapan Mukhopadhyay, and Josh Pollitz.
\newblock High frobenius pushforwards generate the bounded derived category.
\newblock \href{https://arXiv.org/abs/2303.18085}{arXiv:2303.18085}, 2023.

\bibitem[BK06]{Burban/Kreussler:2006}
Igor Burban and Bernd Kreu{\ss}ler.
\newblock On a relative {Fourier}-{Mukai} transform on genus one fibrations.
\newblock {\em Manuscr. Math.}, 120(3):283--306, 2006.

\bibitem[BS20]{Bergh/Schnurer:2020}
Daniel Bergh and Olaf~M. Schn{\"u}rer.
\newblock Conservative descent for semi-orthogonal decompositions.
\newblock {\em Adv. Math.}, 360:39, 2020.
\newblock Id/No 106882.

\bibitem[Buc21]{Buchweitz/Appendix:2021}
Ragnar-Olaf Buchweitz.
\newblock {\em Maximal {Cohen}-{Macaulay} modules and {Tate} cohomology. {With} appendices by {Luchezar} {L}. {Avramov}, {Benjamin} {Briggs}, {Srikanth} {B}. {Iyengar} and {Janina} {C}. {Letz}}, volume 262 of {\em Math. Surv. Monogr.}
\newblock Providence, RI: American Mathematical Society (AMS), 2021.

\bibitem[BVdB03]{Bondal/VandenBergh:2003}
Alexei Bondal and Michel {V}an~den {B}ergh.
\newblock Generators and representability of functors in commutative and noncommutative geometry.
\newblock {\em Mosc. Math. J.}, 3(1):1--36, 258, 2003.

\bibitem[BZ18]{Burban/Zheglov:2018}
Igor Burban and Alexander Zheglov.
\newblock Fourier-{Mukai} transform on {Weierstrass} cubics and commuting differential operators.
\newblock {\em Int. J. Math.}, 29(10):46, 2018.
\newblock Id/No 1850064.

\bibitem[Che10]{Chen:2010}
Xiao-Wu Chen.
\newblock Unifying two results of {Orlov} on singularity categories.
\newblock {\em Abh. Math. Semin. Univ. Hamb.}, 80(2):207--212, 2010.

\bibitem[CLMP24]{Clark/Lank/Parker/ManaliRahul:2024}
Alexander Clark, Pat Lank, Kabeer {Manali Rahul}, and Chris~J. Parker.
\newblock Classification and nonexistence for $t$-structures on derived categories of schemes.
\newblock \href{https://arXiv.org/abs/2404.08578}{arXiv:2404.08578}, 2024.

\bibitem[CNS25]{Canonaco/Neeman/Stellari:2024}
Alberto Canonaco, Amnon Neeman, and Paolo Stellari.
\newblock Weakly approximable triangulated categories and enhancements: a survey.
\newblock {\em Boll. Unione Mat. Ital.}, 18(1):109--134, 2025.

\bibitem[DL25]{Dey/Lank:2024a}
Souvik Dey and Pat Lank.
\newblock Closedness of the singular locus and generation for derived categories.
\newblock {\em Journal of Algebra}, 684:64--77, 2025.

\bibitem[DLM24a]{DeDeyn/Lank/ManaliRahul:2024a}
Timothy {De Deyn}, Pat Lank, and Kabeer {Manali Rahul}.
\newblock Approximability and rouquier dimension for noncommutative algebras over schemes.
\newblock \href{https://arXiv.org/abs/2408.04561}{arXiv:2408.04561}, 2024.

\bibitem[DLM24b]{DeDeyn/Lank/ManaliRahul:2024b}
Timothy {De Deyn}, Pat Lank, and Kabeer {Manali Rahul}.
\newblock Descent and generation for noncommutative coherent algebras over schemes.
\newblock \href{https://arXiv.org/abs/2410.01785}{arXiv:2410.01785}, 2024.

\bibitem[ELS20]{Elagin/Lunts/Schnurer:2020}
Alexey Elagin, Valery~A. Lunts, and Olaf~M. Schn{\"u}rer.
\newblock Smoothness of derived categories of algebras.
\newblock {\em Mosc. Math. J.}, 20(2):277--309, 2020.

\bibitem[GW23]{Gortz/Wedhorn:2023}
Ulrich G{\"o}rtz and Torsten Wedhorn.
\newblock {\em Algebraic geometry {II}: cohomology of schemes. {With} examples and exercises}.
\newblock Springer Stud. Math. -- Master. Wiesbaden: Springer Spektrum, 2023.

\bibitem[Hir64a]{Hironaka:1964a}
H.~Hironaka.
\newblock Resolution of singularities of an algebraic variety over a field of characteristic zero. {I}.
\newblock {\em Ann. Math. (2)}, 79:109--203, 1964.

\bibitem[Hir64b]{Hironaka:1964b}
Heisuke Hironaka.
\newblock Resolution of singularities of an algebraic variety over a field of characteristic zero. {II}.
\newblock {\em Ann. Math. (2)}, 79:205--326, 1964.

\bibitem[HP24]{Hall/Priver:2024}
Jack Hall and Kyle Priver.
\newblock A generalized bondal-orlov full faithfulness criterion for deligne-mumford stacks.
\newblock \href{https://arXiv.org/abs/2405.06229}{arXiv:2405.06229}, 2024.

\bibitem[Huy06]{Huybrechts:2006}
D.~Huybrechts.
\newblock {\em Fourier-{Mukai} transforms in algebraic geometry}.
\newblock Oxford Math. Monogr. Oxford: Clarendon Press, 2006.

\bibitem[IT19]{Iyengar/Takahashi:2019}
Srikanth~B. Iyengar and Ryo Takahashi.
\newblock Openness of the regular locus and generators for module categories.
\newblock {\em Acta Math. Vietnam.}, 44(1):207--212, 2019.

\bibitem[Kal21]{Kalck:2021}
Martin Kalck.
\newblock A new equivalence between singularity categories of commutative algebras.
\newblock {\em Adv. Math.}, 390:11, 2021.
\newblock Id/No 107913.

\bibitem[Kn{\"o}87]{Knorrer:1987}
Horst Kn{\"o}rrer.
\newblock Cohen-{Macaulay} modules on hypersurface singularities. {I}.
\newblock {\em Invent. Math.}, 88:153--164, 1987.

\bibitem[Kol06]{Kollar:2006}
J{\'a}nos Koll{\'a}r.
\newblock Non-quasi-projective moduli spaces.
\newblock {\em Ann. Math. (2)}, 164(3):1077--1096, 2006.

\bibitem[Kov00]{Kovacs:2000}
S{\'a}ndor~J. Kov{\'a}cs.
\newblock A characterization of rational singularities.
\newblock {\em Duke Math. J.}, 102(2):187--191, 2000.

\bibitem[Lan25]{Lank:2025}
Pat Lank.
\newblock Vanishing of higher cohomology and cohomological rationality.
\newblock \href{https://arXiv.org/abs/2504.19907}{arXiv:2504.19907}, 2025.

\bibitem[LM14]{LopezMartin:2014}
Ana~Cristina L{\'o}pez~Mart{\'{\i}}n.
\newblock Fourier-{Mukai} partners of singular genus one curves.
\newblock {\em J. Geom. Phys.}, 83:36--42, 2014.

\bibitem[LMV25]{Lank/McDonald/Venkatesh:2025}
Pat Lank, Peter McDonald, and Sridhar Venkatesh.
\newblock Derived characterizations for rational pairs \`{a} la schwede-takagi and koll\'{a}r-kov\'{a}cs.
\newblock \href{https://arXiv.org/abs/2501.02783}{arXiv:2501.02783}, 2025.

\bibitem[LO10]{Lunts/Orlov:2010}
Valery~A. Lunts and Dmitri~O. Orlov.
\newblock Uniqueness of enhancement for triangulated categories.
\newblock {\em J. Am. Math. Soc.}, 23(3):853--908, 2010.

\bibitem[LV25]{Lank/Venkatesh:2024}
Pat Lank and Sridhar Venkatesh.
\newblock Triangulated characterizations of singularities.
\newblock {\em Nagoya Mathematical Journal}, page 1–15, 2025.

\bibitem[Mat19]{Matsui:2019}
Hiroki Matsui.
\newblock Singular equivalences of commutative {Noetherian} rings and reconstruction of singular loci.
\newblock {\em J. Algebra}, 522:170--194, 2019.

\bibitem[MT17]{Matsui/Takahashi:2017}
Hiroki Matsui and Ryo Takahashi.
\newblock Singularity categories and singular equivalences for resolving subcategories.
\newblock {\em Math. Z.}, 285(1-2):251--286, 2017.

\bibitem[Muk81]{Mukai:1981}
Shigeru Mukai.
\newblock Duality between {{\(D(X)\)}} and {{\(D(\hat X)\)}} with its application to {Picard} sheaves.
\newblock {\em Nagoya Math. J.}, 81:153--175, 1981.

\bibitem[Nag58]{Nagata:1958}
Masayoshi Nagata.
\newblock Existence theorems for nonprojective complete algebraic varieties.
\newblock {\em Ill. J. Math.}, 2:490--498, 1958.

\bibitem[Nee92]{Neeman:1992}
Amnon Neeman.
\newblock The connection between the {$K$}-theory localization theorem of {T}homason, {T}robaugh and {Y}ao and the smashing subcategories of {B}ousfield and {R}avenel.
\newblock {\em Ann. Sci. {\'E}c. Norm. Sup{\'e}r. (4)}, 25(5):547--566, 1992.

\bibitem[Nee96]{Neeman:1996}
Amnon Neeman.
\newblock The {Grothendieck} duality theorem via {Bousfield}'s techniques and {Brown} representability.
\newblock {\em J. Am. Math. Soc.}, 9(1):205--236, 1996.

\bibitem[Nee21a]{Neeman:2021c}
Amnon Neeman.
\newblock Approximable triangulated categories.
\newblock In {\em Representations of algebras, geometry and physics, Maurice Auslander distinguished lectures and international conference, Woods Hole Oceanographic Institute, Woods Hole, MA, USA, April 25--30, 2018}, pages 111--155. Providence, RI: American Mathematical Society (AMS), 2021.

\bibitem[Nee21b]{Neeman:2021a}
Amnon Neeman.
\newblock Strong generators in {{\(\mathbf{D}^{\mathrm{perf}}(X)\)}} and {{\(\mathbf{D}^b_{\mathrm{coh}}(X)\)}}.
\newblock {\em Ann. Math. (2)}, 193(3):689--732, 2021.

\bibitem[Nee21c]{Neeman:2021b}
Amnon Neeman.
\newblock Triangulated categories with a single compact generator and a {B}rown representability theorem.
\newblock \href{https://arXiv.org/abs/1804.02240}{arXiv:1804.02240}, 2021.

\bibitem[Nee24]{Neeman:2022}
Amnon Neeman.
\newblock Bounded t-structures on the category of perfect complexes.
\newblock \textit{Acta. Math. (to appear)}, \href{https://arXiv.org/abs/2202.08861}{arXiv:2202.08861}, 2024.

\bibitem[Oda88]{Oda:1988}
Tadao Oda.
\newblock {\em Convex bodies and algebraic geometry. {An} introduction to the theory of toric varieties}, volume~15 of {\em Ergeb. Math. Grenzgeb., 3. Folge}.
\newblock Berlin etc.: Springer-Verlag, 1988.

\bibitem[Ola23]{Olander:2023}
Noah Olander.
\newblock Ample line bundles and generation time.
\newblock {\em J. Reine Angew. Math.}, 800:299--304, 2023.

\bibitem[Orl97]{Orlov:1997}
D.~O. Orlov.
\newblock Equivalences of derived categories and {{\(K3\)}} surfaces.
\newblock {\em J. Math. Sci., New York}, 84(5):1361--1381, 1997.

\bibitem[Orl04]{Orlov:2004}
D.~O. Orlov.
\newblock Triangulated categories of singularities and {D}-branes in {Landau}-{Ginzburg} models.
\newblock In {\em Algebraic geometry. Methods, relations, and applications. Collected papers. Dedicated to the memory of Andrei Nikolaevich Tyurin.}, pages 227--248. Moscow: Maik Nauka/Interperiodica, 2004.

\bibitem[PS21]{Pavic/Shinder:2021}
Nebojsa Pavic and Evgeny Shinder.
\newblock {{\(K\)}}-theory and the singularity category of quotient singularities.
\newblock {\em Ann. \(K\)-Theory}, 6(3):381--424, 2021.

\bibitem[Riz17]{Rizzardo:2017}
Alice Rizzardo.
\newblock Adjoints to a {Fourier}-{Mukai} functor.
\newblock {\em Adv. Math.}, 322:83--96, 2017.

\bibitem[RMdS07]{Ruiperez/Martin/deSalas:2007}
Daniel~Hern{\'a}ndez Ruip{\'e}rez, Ana Cristina~L{\'o}pez Mart{\'{\i}}n, and Fernando~Sancho de~Salas.
\newblock Fourier--{Mukai} transforms for {Gorenstein} schemes.
\newblock {\em Adv. Math.}, 211(2):594--620, 2007.

\bibitem[RMdS09]{Ruiperez/Martin/deSalas:2009}
Daniel~Hern{\'a}ndez Ruip{\'e}rez, Ana Cristina~L{\'o}pez Mart{\'{\i}}n, and Fernando~Sancho de~Salas.
\newblock Relative integral functors for singular fibrations and singular partners.
\newblock {\em J. Eur. Math. Soc. (JEMS)}, 11(3):597--625, 2009.

\bibitem[Rou08]{Rouquier:2008}
Rapha{\"e}l Rouquier.
\newblock Dimensions of triangulated categories.
\newblock {\em J. \(K\)-Theory}, 1(2):193--256, 2008.

\bibitem[RVdB15]{Rizzardo/VandenBergh:2015}
Alice Rizzardo and Michel Van~den Bergh.
\newblock Scalar extensions of derived categories and non-{Fourier}-{Mukai} functors.
\newblock {\em Adv. Math.}, 281:1100--1144, 2015.

\bibitem[RVdBN19]{Rizzardo/VandenBergh/Neeman:2019}
Alice Rizzardo, Michel Van~den Bergh, and Amnon Neeman.
\newblock An example of a non-{Fourier}-{Mukai} functor between derived categories of coherent sheaves.
\newblock {\em Invent. Math.}, 216(3):927--1004, 2019.

\bibitem[Sch99]{Schroer:1999}
Stefan Schr{\"o}er.
\newblock On non-projective normal surfaces.
\newblock {\em Manuscr. Math.}, 100(3):317--321, 1999.

\bibitem[Spe23]{Spence:2023}
Dylan Spence.
\newblock Reconstruction of projective curves from the derived category.
\newblock {\em Mich. Math. J.}, 73(1):171--194, 2023.

\bibitem[{Sta}25]{StacksProject}
The {Stacks project authors}.
\newblock The stacks project.
\newblock \url{https://stacks.math.columbia.edu}, 2025.

\end{thebibliography}

\end{document}